\newtheorem{theorem}{Theorem}[section]
\newtheorem{corollary}[theorem]{Corollary}
\newtheorem{proposition}[theorem]{Proposition}
\newtheorem{lemma}[theorem]{Lemma}
\theoremstyle{definition}
\newtheorem{remark}[theorem]{Remark}
\begin{document}

\title{Classification of graphs by Laplacian eigenvalue distribution and independence number}
\author{Jinwon Choi\footnotemark[1],\, Sunyo Moon\footnotemark[1],\,  
and Seungkook Park\footnotemark[1]} 

\date{}
\renewcommand{\thefootnote}{\fnsymbol{footnote}}
\footnotetext[1]{Department of Mathematics and Research Institute of Natural Sciences, Sookmyung Women's University, Seoul, Republic of Korea(\{jwchoi, symoon, skpark\}@sookmyung.ac.kr)}
\renewcommand{\thefootnote}{\arabic{footnote}}

\maketitle

\begin{abstract}
Let $m_GI$ denote the number of Laplacian eigenvalues of a graph $G$ in an interval $I$ and let $\alpha(G)$ denote the independence number of $G$. In this paper, we determine the classes of graphs that satisfy the condition $m_G[0,n-\alpha(G)]=\alpha(G)$ when $\alpha(G)= 2$ and $\alpha(G)= n-2$, where $n$ is the order of $G$. When $\alpha(G)=2$, $G \cong K_1 \nabla K_{n-m} \nabla K_{m-1}$ for some $m \geq 2$.
When $\alpha(G)=n-2$,
there are two types of graphs $B(p,q,r)$ and $B'(p,q,r)$ of order $n=p+q+r+2$, which we call the binary star graphs.
Also, we show that the binary star graphs with $p=r$ are determined by their Laplacian spectra.
\end{abstract}

\section{Introduction}

Let $G=(V(G),E(G))$ be a simple graph with vertex set $V(G)$ and edge set $E(G)$.
For a given graph $G$ of order $n$, the \textit{Laplacian matrix} $L(G)$ of $G$ is defined as $L(G)=D(G)-A(G)$, where $D(G)$ is the diagonal matrix of vertex degrees and $A(G)$ is the adjacency matrix of $G$.
Note that $L(G)$ is symmetric and positive semidefinite.
For a matrix $M$, we let $\mu(M,x)=\det(xI-M$) be the characteristic polynomial of $M$ and we denote by $\mu(G,x)$ the characteristic polynomial of the Laplacian matrix $L(G)$.  The roots of $\mu(G,x)$ are called the {\it Laplacian eigenvalues} of the graph $G$.
We denote the Laplacian eigenvalues of $G$ by $$0=\lambda_n(G)\leq \lambda_{n-1}(G) \leq \cdots \leq \lambda_1(G).$$ The multiset of all the Laplacian eigenvalues of $G$ is called the {\it Laplacian spectrum} of $G$.

The relation between the graph parameters and the distribution of the Laplacian eigenvalues has been studied by many researchers. In \cite{Wang}, Wang et al. showed that $m_G(n-1,n] \leq \chi(G)-1$, where $\chi(G)$ is the chromatic number of $G$. In \cite{Hedetniemi}, Hedetniemi et al. proved that $m_G[0,1] \leq \gamma$, where $\gamma$ is the domination number of $G$. Recently, Ahanjideh et al.\cite{Ahanjideh} gave a relation between the number of Laplacian eigenvalues and the independent number of a graph. They showed that a connected graph $G$ of order $n$ satisfies $\alpha(G) \leq m_G[0,n-\alpha(G)]$. It would be an interesting problem to characterize all the connected graphs for which the equality holds, that is, $m_G[0,n-\alpha(G)]=\alpha(G)$. For $\alpha(G)=1$, the complete graph $K_n$ of order $n$ is the only graph satisfying $m_G[0,n-1]=1$ and for $\alpha(G)=n-1$, the star graph $S_n$ of order $n$ is the only graph satisfying $m_G[0,1]=n-1$. In this paper, we determine the classes of connected graphs that satisfy the condition $m_G[0,n-\alpha(G)]=\alpha(G)$ when $\alpha(G)= 2$ and $\alpha(G)= n-2$. When $\alpha(G)=2$, the graphs satisfying the condition are $K_1 \nabla K_{n-m} \nabla K_{m-1}$ for some $m \geq 2$. When $\alpha(G)=n-2$,
there are two types of graphs $B(p,q,r)$ and $B'(p,q,r)$ of order $n=p+q+r+2$ with some restrictions. We will call these graphs the binary star graphs. We also prove that the binary star graphs $B(p,q,p)$ and $B'(p,q,p)$ are determined by their Laplacian spectra.

The organization of the paper is as follows. In section 2, we give known definitions and formulas for the characteristic polynomial of the Laplacian matrix of some graphs. In section 3, we classify the graphs that satisfy $m_G[0,n-\alpha(G)]=\alpha(G)$ for $\alpha(G)= 2$ and $\alpha(G)= n-2$. In section 4, we show that the binary star graphs
$B(p,q,p)$ and $B'(p,q,p)$ of order $n=2p+q+2$ are determined by their Laplacian spectra.

\section{Preliminaries}

In this section, we give notations and collect known definitions. We also introduce some properties on the Laplacian eigenvalues and formulas for the characteristic polynomial of the Laplacian matrix of some graphs.
The path graph, cycle graph, star graph, complete graph, and the complete multipartite graph with part sizes $m_1,\ldots, m_t$, where all graphs are of order $n$, are denoted by $P_n$, $C_n$, $S_n$, $K_n$, $K_{m_1,\ldots, m_t}$, respectively.
If the Laplacian eigenvalues of $G$ are $0=\lambda_n(G)\leq \lambda_{n-1}(G) \leq \cdots \leq \lambda_1(G)$ then the eigenvalues of the complement graph $\bar{G}$ of $G$ are $$0=\lambda_n(\bar{G})\leq n-\lambda_1(G) \leq n-\lambda_{2}(G)\leq \cdots \leq n-\lambda_{n-1}(G).$$ In \cite{Fiedler}, Fiedler called the second smallest Laplacian eigenvalue of $G$ the {\it algebraic connectivity} of the graph of $G$, which is a measure of connectivity of $G$. It is well known that $\lambda_{n-1}(G)=0$ if and only if $G$ is disconnected.

The \textit{disjoint union} of the graphs $G_1$ and $G_2$ is denoted by $G_1 \cup G_2$.
Let $G_1, G_2, \ldots, G_k$ be a sequence of pairwise disjoint graphs.
Denote by $G_1 \nabla G_2 \nabla \cdots \nabla G_k$ the graph obtained from $G_1, G_2, \ldots G_k$ by adding all edges $uv$ with $u \in V(G_i)$ and $v \in V(G_{i+1})$. In particular, $G_1 \nabla G_2$ is the \textit{join} of $G_1$ and $G_2$.

\begin{lemma}[{\cite[Theorem 3.7]{Mohar}}]\label{char-join}
  Let $G_1$ and $G_2$ be graphs of order $n_1$ and $n_2$, respectively. Then
  \begin{equation*}
    \mu(G_1\nabla G_2, x) =\frac{x(x-n_1-n_2)}{(x-n_1)(x-n_2)} \mu(G_1,x-n_2)\mu(G_2,x-n_1).
  \end{equation*}
\end{lemma}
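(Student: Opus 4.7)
The plan is to partition the vertex set of $G_1 \nabla G_2$ into $V(G_1) \cup V(G_2)$ and exploit the resulting block form of the Laplacian. Every vertex of $G_i$ acquires $n_{3-i}$ new neighbors in the join, so the diagonal blocks are $L(G_i) + n_{3-i} I_{n_i}$, while the off-diagonal blocks are $-J$ and $-J^T$, where $J$ is the $n_1 \times n_2$ all-ones matrix. Thus $xI - L(G_1 \nabla G_2)$ splits into the blocks $A = (x - n_2)I_{n_1} - L(G_1)$ and $D = (x - n_1)I_{n_2} - L(G_2)$ on the diagonal, with $J$ and $J^T$ off the diagonal.

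Next, I would apply the Schur-complement formula with respect to the block $A$, writing $\det(xI - L(G_1 \nabla G_2)) = \det(A)\det(D - J^T A^{-1} J)$ on the open set where $A$ is invertible. The main engine is that the all-ones vector is always a null eigenvector of a Laplacian: from $L(G_1)\mathbf{1}_{n_1} = 0$ one gets $A^{-1} J = \tfrac{1}{x - n_2} J$, so the Schur complement becomes the rank-one perturbation $D - \tfrac{n_1}{x - n_2}\mathbf{1}_{n_2}\mathbf{1}_{n_2}^T$. Applying the matrix-determinant lemma, once again using that $D \mathbf{1}_{n_2} = (x - n_1)\mathbf{1}_{n_2}$, produces $\mu(G_2, x - n_1)\bigl(1 - \tfrac{n_1 n_2}{(x - n_1)(x - n_2)}\bigr)$, whose scalar factor simplifies to $\tfrac{x(x - n_1 - n_2)}{(x - n_1)(x - n_2)}$. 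Combining this with $\det A = \mu(G_1, x - n_2)$ yields the claimed formula.

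The only real subtlety, and the main obstacle, is the status of the identity at the apparent singularities $x = n_1$ and $x = n_2$: the derivation only makes sense on the open set where $A$ and $D$ are invertible, yet the statement is an equality of rational expressions that nominally allow these values. However, since $0$ is a Laplacian eigenvalue of each $G_i$, the factor $(x - n_2)$ divides $\mu(G_1, x - n_2)$ and $(x - n_1)$ divides $\mu(G_2, x - n_1)$, so the right-hand side is in fact a polynomial. An identity of rational functions that holds on a Zariski-dense open set holds everywhere, so the formula extends to the polynomial identity asserted in the lemma.
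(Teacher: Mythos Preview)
Your argument is correct: the block decomposition of $L(G_1\nabla G_2)$, the Schur-complement reduction using $L(G_i)\mathbf{1}=0$, and the matrix-determinant lemma combine exactly as you describe, and the remark about the apparent poles at $x=n_1,n_2$ being cancelled by the Laplacian zero eigenvalue of each $G_i$ is the right way to close the argument.

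There is, however, nothing in the paper to compare against: this lemma is quoted from Mohar \cite{Mohar} as a known result and no proof is given here. Your Schur-complement derivation is one of the standard proofs of this identity; Mohar's original argument instead reads off the Laplacian spectrum of the join directly from the eigenvectors, observing that eigenvectors of $L(G_i)$ orthogonal to $\mathbf{1}$ remain eigenvectors for $L(G_1\nabla G_2)$ with eigenvalues shifted by $n_{3-i}$, while the two-dimensional space spanned by $(\mathbf{1}_{n_1},0)$ and $(0,\mathbf{1}_{n_2})$ contributes the eigenvalues $0$ and $n_1+n_2$. Both routes are short; yours is purely determinantal and avoids any eigenvector bookkeeping, while the spectral approach makes the formula's structure (the shift by $n_{3-i}$ and the replacement of two zero eigenvalues by $0$ and $n_1+n_2$) more transparent.
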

\begin{lemma}[{\cite[Theorem 4.1]{Grone}}]\label{interlace}
  Let $G$ be a graph of order $n$ and let $e$ be an edge of $G$. Suppose that $G'$ is the subgraph obtained from $G$ by deleting the edge $e$. Then
  \begin{equation*}
    0=\lambda_{n}(G')\leq \lambda_{n}(G) \leq \lambda_{n-1}(G')\leq \lambda_{n-1}(G) \leq \cdots \leq
    \lambda_{1}(G')\leq \lambda_{1}(G).
  \end{equation*}
\end{lemma}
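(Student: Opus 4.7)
The plan is to realize the edge deletion as a rank-one positive semidefinite perturbation of the Laplacian, and then to invoke the standard interlacing inequality of Weyl. Concretely, if $e=ij$ is the edge to be deleted, then
\begin{equation*}
L(G) = L(G') + M_e, \qquad M_e = (\mathbf{e}_i-\mathbf{e}_j)(\mathbf{e}_i-\mathbf{e}_j)^T,
\end{equation*}
where $\mathbf{e}_i,\mathbf{e}_j$ are the standard basis vectors of $\mathbb{R}^n$. Thus $M_e$ is symmetric, rank one, and positive semidefinite, with eigenvalues $2,0,0,\dots,0$.

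Once this is in hand, I would apply the following facts about eigenvalues of a rank-one PSD perturbation: for any real symmetric $n\times n$ matrices $A$ and $B=A+vv^T$, with eigenvalues ordered $\mu_n\le\mu_{n-1}\le\dots\le\mu_1$,
\begin{equation*}
\mu_k(A) \le \mu_k(B) \le \mu_{k-1}(A) \qquad (k=1,\dots,n),
\end{equation*}
where by convention $\mu_0(A)=+\infty$. Taking $A=L(G')$ and $B=L(G)$ gives exactly the claimed chain
$\lambda_n(G')\le \lambda_n(G)\le \lambda_{n-1}(G')\le \lambda_{n-1}(G)\le \cdots\le \lambda_1(G')\le \lambda_1(G)$, and $\lambda_n(G')=0$ since every Laplacian has the all-ones vector in its kernel.

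To make the argument self-contained, I would derive the two one-sided inequalities from the Courant--Fischer characterization
\begin{equation*}
\lambda_k(M)=\max_{\substack{U\subset\mathbb{R}^n\\ \dim U=n-k+1}}\ \min_{\substack{x\in U\\ x\ne 0}} \frac{x^T M x}{x^T x}.
\end{equation*}
Since $x^T L(G)x = x^T L(G')x + (x_i-x_j)^2 \ge x^T L(G')x$ for every $x$, the lower inequality $\lambda_k(G')\le \lambda_k(G)$ is immediate. For the upper inequality, restrict the max over subspaces $U$ to those contained in the hyperplane $H=\{x:x_i=x_j\}$; on $H$ the Rayleigh quotients of $L(G)$ and $L(G')$ coincide, and $H$ has codimension one, so the max over $(n-k+1)$-dimensional subspaces of $H$ is at most $\lambda_{k-1}(G')$ and at least $\lambda_k(G)$. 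Combining these yields $\lambda_k(G)\le \lambda_{k-1}(G')$.

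The main obstacle is not conceptual but purely bookkeeping: keeping the two different orderings (ascending vs.\ descending) consistent and checking the boundary index $k=1$, where the upper bound $\lambda_1(G)\le \lambda_0(G')=+\infty$ is vacuous and must be replaced by the trivial observation $\lambda_1(G')\le \lambda_1(G)$ that already follows from the lower inequality. No further combinatorial input about $G$ is required beyond the symmetry and PSD-ness of Laplacians.
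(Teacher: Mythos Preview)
The paper does not supply its own proof of this lemma; it is quoted from \cite{Grone} and used as a black box. Your approach---writing $L(G)=L(G')+(\mathbf{e}_i-\mathbf{e}_j)(\mathbf{e}_i-\mathbf{e}_j)^T$ as a rank-one positive semidefinite perturbation and then appealing to Courant--Fischer/Weyl interlacing---is the standard argument and is essentially correct.

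There is one bookkeeping slip of exactly the kind you warned yourself about. With the descending convention $\lambda_1\ge\lambda_2\ge\cdots\ge\lambda_n$, the max--min form of Courant--Fischer is
\[
\lambda_k(M)=\max_{\dim U=k}\ \min_{0\ne x\in U}\frac{x^{T}Mx}{x^{T}x},
\]
not $\dim U=n-k+1$ as you wrote (test $k=1$: your formula would return the \emph{smallest} eigenvalue). With the wrong dimension your sentence ``the max over $(n-k+1)$-dimensional subspaces of $H$ is at most $\lambda_{k-1}(G')$ and at least $\lambda_k(G)$'' does not hold: restricting the maximum to a smaller family can only decrease it, so you would get $\le\lambda_k(G)$, not $\ge$. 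With the correct dimension the argument runs cleanly: if $U^\ast$ is a $k$-dimensional maximizer for $\lambda_k(G)$, then $U^\ast\cap H$ has dimension at least $k-1$, the Rayleigh quotients of $L(G)$ and $L(G')$ agree there, and hence
\[
\lambda_{k-1}(G')\ \ge\ \min_{0\ne x\in U^\ast\cap H} R_{G'}(x)\ =\ \min_{0\ne x\in U^\ast\cap H} R_{G}(x)\ \ge\ \min_{0\ne x\in U^\ast} R_{G}(x)\ =\ \lambda_k(G).
\]
After this correction the proof is complete.
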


A vertex of degree 1 is called a {\it pendant vertex} and the edge attached to a pendant vertex is called a {\it pendant edge}.
In \cite{Faria}, Faria defined a {\it pendant star} of a graph as a maximal subgraph formed by pendant edges all incident with the same vertex(the center of the pendant star).
The number of its pendant vertices minus one is called the {\it degree of a pendant star}.
The sum of the degree of all pendant stars is called the {\it star degree} of graph.
\begin{lemma}[\cite{Faria}]\label{stardeg}
  Let $G$ be a graph with star degree $p$, then the multiplicity of $1$ as a root of $\mu(G,x)$ is greater than or equal to $p$.
\end{lemma}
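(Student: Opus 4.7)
The plan is to construct, for each pendant star, a subspace of eigenvectors of $L(G)$ with eigenvalue $1$ whose dimension equals the degree of that pendant star, and then observe that these subspaces are linearly independent for distinct pendant stars. Summing the dimensions recovers the star degree $p$, giving the desired bound on multiplicity.

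Fix a pendant star with center $v$ and pendant vertices $u_1,\ldots,u_k$, so its degree is $k-1$. I would consider vectors $x\in\mathbb{R}^{V(G)}$ supported on $\{u_1,\ldots,u_k\}$, with $x_{u_i}=c_i$ satisfying $\sum_i c_i=0$, and $x_w=0$ for every other vertex $w$. The key computation is to verify $L(G)x=x$ coordinate by coordinate. At a pendant vertex $u_i$, since $\deg(u_i)=1$ and its only neighbor $v$ has $x_v=0$, one gets $(Lx)_{u_i}=c_i=x_{u_i}$. At the center $v$, the contribution from its neighbors is $-\sum_i c_i$ (the other neighbors of $v$ carry $x$-value zero), which vanishes, and $\deg(v)\cdot x_v=0$; so $(Lx)_v=0=x_v$. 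At any other vertex $w$, both $x_w$ and all $x_z$ for $z\sim w$ are zero (here the point is that a pendant vertex of this star is adjacent only to $v$, so $w$ is not adjacent to any $u_i$), so $(Lx)_w=0=x_w$. Hence each such $x$ is an eigenvector of $L(G)$ for eigenvalue $1$, and the space of admissible $(c_1,\ldots,c_k)$ has dimension $k-1$.

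Next I would run this construction simultaneously over all pendant stars of $G$. Since distinct pendant stars have distinct centers and therefore disjoint sets of pendant vertices, the eigenvectors produced from different pendant stars have disjoint supports, so vectors from different stars are automatically linearly independent, and within each star we have the $(k-1)$-dimensional space described above. Adding the dimensions yields $\sum (k_i-1)=p$ linearly independent eigenvectors for the eigenvalue $1$, proving that the multiplicity of $1$ as a root of $\mu(G,x)$ is at least $p$.

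The only mildly delicate step is the verification at vertices that are neighbors of the center $v$ but are not themselves pendant vertices of this star; the argument works because the defining feature of a pendant vertex is degree one, so such a neighbor $w$ of $v$ is never adjacent to any $u_i$, and thus the eigenvector equation at $w$ reduces to $0=0$. No further obstacle arises, and the result follows without needing any further machinery beyond the definition of $L(G)$.
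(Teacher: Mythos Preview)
Your argument is correct. The paper does not supply its own proof of this lemma; it is quoted from Faria's paper and used as a black box. Your eigenvector construction is the standard approach and is essentially the argument Faria gives: build, for each pendant star with $k$ leaves, a $(k-1)$-dimensional space of eigenvectors for eigenvalue $1$ supported on those leaves, and observe that distinct pendant stars yield vectors with disjoint supports. There is nothing to compare against in the present paper, and your proof stands on its own.
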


\begin{figure}[h!t!]
  \centering
  \includegraphics[width=4cm]{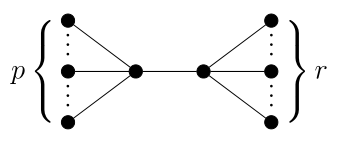}
  \caption{Double star $S(p,r)$}\label{Fig:dobulestar}
\end{figure}

The {\it double star} $S(p,r)$ is a graph obtained from two star graphs $S_{p+1}$ and $S_{r+1}$ joining their central vertices.
In \cite{Grone-O}, Grone and Merris gave the characteristic polynomial of the Laplacian matrix of the double star graph.
\begin{lemma}[{\cite[Proposition 1]{Grone-O}}]\label{char:d-star}
  Fix $n=p+r+2$. Then the characteristic polynomial of $L(S(p,r))$ is
  \begin{equation*}
    x(x-1)^{n-4}(x^3-(n+2)x^2+(2n+pr+1)x-n).
  \end{equation*}
\end{lemma}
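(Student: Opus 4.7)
The plan is to combine the pendant-star lemma (Lemma \ref{stardeg}) with standard spectral identities. First I would compute the star degree of $S(p,r)$: the two central vertices are the centers of pendant stars with $p$ and $r$ pendant vertices respectively, so the star degree of $S(p,r)$ is $(p-1)+(r-1)=p+r-2=n-4$. By Lemma \ref{stardeg}, the multiplicity of $1$ as a root of $\mu(S(p,r),x)$ is at least $n-4$. Combined with the fact that $0$ is a (simple) Laplacian eigenvalue since $S(p,r)$ is connected, we may write
$$\mu(S(p,r),x) = x\,(x-1)^{n-4}\,(x^{3}-a x^{2}+b x-c)$$
for some real numbers $a,b,c$, and it remains to pin these three coefficients down.

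Next I would identify $a$, $b$, $c$ as the elementary symmetric functions of the three remaining Laplacian eigenvalues $\mu_{1},\mu_{2},\mu_{3}$. The coefficient $a$ is forced by the trace: since $|E(S(p,r))|=n-1$, we get $\mu_{1}+\mu_{2}+\mu_{3}=\mathrm{tr}\,L(S(p,r))-(n-4)=2(n-1)-(n-4)=n+2$. The coefficient $c$ is forced by the matrix-tree theorem applied to the tree $S(p,r)$: the product of its nonzero Laplacian eigenvalues equals $n\cdot 1=n$, so $\mu_{1}\mu_{2}\mu_{3}=n$.

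To determine $b$ I would compute the second elementary symmetric function $e_{2}$ of the full Laplacian spectrum in two ways. On one hand, reading off the known $(n-4)$ copies of $1$ and the zero root gives
$$e_{2} = \binom{n-4}{2} + (n-4)(n+2) + b.$$
On the other hand, $e_{2}=\tfrac{1}{2}\bigl((2|E|)^{2}-\mathrm{tr}\,L^{2}\bigr)$ and $\mathrm{tr}\,L^{2}=\sum_{v}d(v)^{2}+2|E|$. The degree sequence of $S(p,r)$ is $(p+1,r+1,1,\ldots,1)$ with $p+r$ ones, so $\sum_{v}d(v)^{2}=(p+1)^{2}+(r+1)^{2}+(p+r)$. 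Equating the two expressions for $e_{2}$ and solving will yield $b=2n+pr+1$, completing the identification of the cubic factor.

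The main obstacle is simply the bookkeeping in the $b$-computation: the target expression contains the cross term $pr$, so both sides of the $e_{2}$ identity must be expanded fully in $p$ and $r$ and then condensed using $n=p+r+2$ before the match becomes visible. No step is conceptually deep; the entire lemma reduces to the structural observation that $S(p,r)$ has star degree $n-4$, together with three standard spectral identities for the trace, the determinant (via matrix-tree), and the second elementary symmetric function of the Laplacian spectrum.
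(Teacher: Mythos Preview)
Your argument is correct. The paper does not give its own proof of this lemma; it is quoted from Grone--Merris and used as input. The closest in-paper analogue is the proof of Theorem~\ref{char:B} (and Theorem~\ref{char:B'}, whose $q=0$ case is exactly the present lemma by the remark following it). That proof shares your first three moves --- the pendant-star lemma to peel off the $(x-1)$ power, the trace for the leading coefficient of the residual factor, and the spanning-tree count for its constant term --- but handles the remaining middle coefficient(s) differently: rather than the Newton identity $e_{2}=\tfrac{1}{2}\bigl((\mathrm{tr}\,L)^{2}-\mathrm{tr}\,L^{2}\bigr)$, the paper reads off specific coefficients of $\mu(L(G)-I,x)$ and $\mu(L(G)-2I,x)$ as sums of principal minors of the shifted matrices and solves the resulting linear system. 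Your $\mathrm{tr}\,L^{2}$ route is tidier here because only one unknown remains; the paper's principal-minor device scales better to the binary-star setting, where two middle coefficients must be recovered and an extra eigenvalue $2$ (from Lemma~\ref{deg2}) supplies a second shifted matrix to exploit.
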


\section{Laplacian eigenvalue distribution}

In \cite{Ahanjideh}, Ahanjideh et al. investigated a relation between the number of Laplacian eigenvalues in an interval and the independent number of a graph. They showed that
a connected graph $G$ of order $n$ satisfies $\alpha(G) \leq m_G[0,n-\alpha(G)]$.
Here, we give another proof.
\begin{theorem}
  Let $G$ be a graph of order $n$, then $\alpha(G) \leq m_G[0,n-\alpha(G)]$.
\end{theorem}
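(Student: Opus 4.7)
\medskip

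\noindent\textbf{Proof proposal.} The natural approach is to pass to the complement $\bar{G}$, where the maximum independent set of $G$ becomes a clique, and to exploit the edge-deletion interlacing inequality from Lemma \ref{interlace}.

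First, I would let $\alpha = \alpha(G)$, fix a maximum independent set $S \subseteq V(G)$ with $|S|=\alpha$, and observe that in $\bar{G}$ the set $S$ induces a complete graph $K_\alpha$. Rewriting the desired inequality via the spectrum of the complement is the key translation: since the nonzero Laplacian eigenvalues of $\bar{G}$ are $n-\lambda_i(G)$ for $i=1,\dots,n-1$, the bound $m_G[0,n-\alpha] \geq \alpha$ is equivalent to
\begin{equation*}
\lambda_{\alpha-1}(\bar{G}) \geq \alpha,
\end{equation*}
i.e., the $(\alpha-1)$-th largest Laplacian eigenvalue of $\bar{G}$ is at least $\alpha$.

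Next, I would introduce the spanning subgraph $H$ of $\bar{G}$ consisting of the clique on $S$ together with $n-\alpha$ isolated vertices. The Laplacian spectrum of $H$ is immediate from that of $K_\alpha$ plus isolated points: the eigenvalue $\alpha$ occurs with multiplicity $\alpha-1$, and $0$ occurs with multiplicity $n-\alpha+1$. Applying Lemma \ref{interlace} repeatedly as edges are added from $H$ to $\bar{G}$ yields $\lambda_i(H) \leq \lambda_i(\bar{G})$ for every $i$, and in particular $\lambda_{\alpha-1}(\bar{G}) \geq \lambda_{\alpha-1}(H) = \alpha$. Converting back to $G$ via the complement relation gives $\lambda_{n-\alpha+1}(G) \leq n-\alpha$, so the $\alpha$ eigenvalues $\lambda_n(G),\lambda_{n-1}(G),\dots,\lambda_{n-\alpha+1}(G)$ all lie in $[0,n-\alpha]$, which is exactly the claim.

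The substantive content lies entirely in choosing the right auxiliary subgraph $H$ and matching the index $\alpha-1$ correctly between $G$ and $\bar{G}$; once those are set up, the Laplacian spectrum of $H$ is elementary and the interlacing step is a direct invocation of Lemma \ref{interlace}. I do not expect a real obstacle: the only place to be careful is the off-by-one in relating $\lambda_i(\bar{G})$ to $\lambda_{n-i}(G)$ and in verifying that the multiplicity $\alpha-1$ of the eigenvalue $\alpha$ in $H$ is exactly what supplies the desired $\alpha$ eigenvalues (including $\lambda_n(G)=0$) on the $G$-side.
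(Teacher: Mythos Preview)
Your proposal is correct and follows essentially the same route as the paper: pass to $\bar{G}$, observe that the maximum independent set becomes a copy of $K_\alpha$, invoke the edge-interlacing Lemma~\ref{interlace} to get $m_{\bar{G}}[\alpha,n]\geq \alpha-1$, and translate back via $\lambda_i(\bar{G})=n-\lambda_{n-i}(G)$. Your version is slightly more careful in padding $K_\alpha$ with $n-\alpha$ isolated vertices to form a spanning subgraph $H$ of $\bar{G}$ before applying Lemma~\ref{interlace}, which is exactly the technical refinement the paper's one-line invocation leaves implicit.
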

\begin{proof}
  The complement graph $\bar{G}$ of $G$ contains the complete graph $K_{\alpha(G)}$.
  Since Laplacian eigenvalues of $K_{\alpha(G)}$ are $0$ and $\alpha(G)$ with the multiplicity ${\alpha(G)}-1$, by Lemma \ref{interlace},
  $$m_{\bar{G}}[{\alpha(G)},n]\geq {\alpha(G)} -1.$$
  Since $\lambda_i(\bar{G})=n-\lambda_{n-i}(G)$ for $1 \leq i\leq n-1$,  we have $m_G[0,n-{\alpha(G)}] \geq {\alpha(G)}$.
\end{proof}

Now, we focus on the graphs $G$ for which $m_G[0,n-\alpha(G)]=\alpha(G)$.
The complete graph $K_n$ is the only graph satisfying $m_G[0,n-\alpha(G)] =\alpha(G)$ when $\alpha(G)=1$ and the star graph $S_n$ is the only graphs satisfying $m_G[0,n-\alpha(G)] =\alpha(G)$ when $\alpha(G)=n-1$ \cite[Theorem 5.2]{Ahanjideh}.
In this section, we consider the cases $\alpha(G)=2$ and $\alpha(G)=n-2$.
For later use, we give the Laplacian spectra of graphs in Figure \ref{Fig:graphs}.
\begin{figure}[h!t!]
  \centering
  \includegraphics[width=12cm]{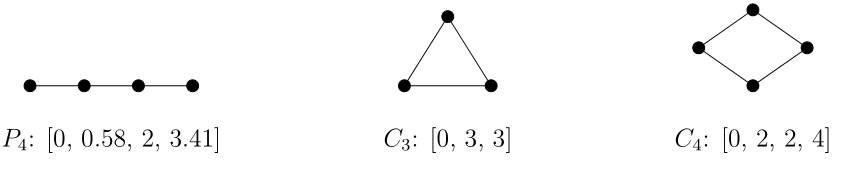}
  \caption{Graphs with their Laplacian spectra}\label{Fig:graphs}
\end{figure}

\begin{theorem}
  Let $G$ be a graph of order $n$ with $\alpha(G)=2$.
  Then $m_G[0,n-2]=2$ if and only if $G \cong K_1 \nabla K_{n-m} \nabla K_{m-1}$ for some $m \geq 2$.
\end{theorem}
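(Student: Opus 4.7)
My plan is to pass to the complement $\bar G$, where the problem takes a much cleaner form. The identities $\alpha(G)=\omega(\bar G)$ and $\lambda_i(\bar G)=n-\lambda_{n-i}(G)$ for $1\le i\le n-1$ recalled in Section~2 translate the hypothesis $\alpha(G)=2$ into ``$\bar G$ is triangle-free and has at least one edge,'' and translate the condition $m_G[0,n-2]=2$ into the single spectral inequality $\lambda_2(\bar G)<2$. So the theorem reduces to identifying the triangle-free graphs on $n$ vertices that have at least one edge and second-largest Laplacian eigenvalue strictly less than~$2$.

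For the sufficiency direction, I would observe that the only non-edges of $K_1\nabla K_{n-m}\nabla K_{m-1}$ are the $m-1$ pairs between the apex vertex (the $K_1$) and the terminal clique (the $K_{m-1}$), so the complement is $\bar G\cong S_m\cup(n-m)K_1$. The Laplacian spectrum of $S_m$ is the standard $\{m,\,1^{(m-2)},\,0\}$, so $\bar G$ has spectrum $\{m,\,1^{(m-2)},\,0^{(n-m+1)}\}$ and in particular $\lambda_2(\bar G)\le 1<2$. Pulling this back through the complement-spectrum formula, $G$ has Laplacian spectrum $\{n^{(n-m)},\,(n-1)^{(m-2)},\,n-m,\,0\}$, and exactly the two eigenvalues $0$ and $n-m$ lie in $[0,n-2]$ whenever $m\ge 2$, confirming $m_G[0,n-2]=2$.

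For the necessity direction, the crux is to show that $\bar G$ contains no two vertex-disjoint edges. Suppose for contradiction it did, and let $H$ be the spanning subgraph of $\bar G$ obtained by keeping only those two edges. Then $H\cong 2K_2\cup(n-4)K_1$ has Laplacian spectrum $\{2,2,0,\ldots,0\}$, so $\lambda_2(H)=2$. Iterating Lemma~\ref{interlace} over the single-edge deletions that carry $\bar G$ down to $H$ gives $\lambda_2(\bar G)\ge \lambda_2(H)=2$, contradicting $\lambda_2(\bar G)<2$. Now combining ``no two disjoint edges'' with triangle-freeness, a short case check forces every edge of $\bar G$ to pass through a single common vertex: three pairwise intersecting edges with no vertex in all three would have pairwise intersections at three distinct vertices, and these three edges would then form a triangle on those vertices. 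Therefore $\bar G\cong S_m\cup(n-m)K_1$ for some $m\ge 2$, and taking complements yields $G\cong K_1\nabla K_{n-m}\nabla K_{m-1}$.

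The main obstacle is picking the right spanning subgraph to interlace against: once the analytic condition $\lambda_2(\bar G)<2$ is converted into the structural statement ``matching number at most one in the triangle-free graph $\bar G$,'' the remaining work is an elementary graph-theoretic case analysis together with the routine Laplacian computation for the star $S_m$ and the complement-spectrum bookkeeping.
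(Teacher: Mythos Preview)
Your proof is correct and follows the same overall strategy as the paper: pass to the complement, translate $m_G[0,n-2]=2$ into $m_{\bar G}[2,n]=1$ (equivalently $\lambda_2(\bar G)<2$), and use edge-deletion interlacing (Lemma~\ref{interlace}) against a small subgraph to force $\bar G\cong S_m\cup(n-m)K_1$. The difference lies in which subgraph you interlace against. The paper forbids $P_4$ and $C_3$ in $\bar G$ (each has two Laplacian eigenvalues at least $2$), concludes that $\bar G$ is a disjoint union of stars and isolated vertices, and then in a second step rules out two nontrivial star components. You instead forbid $2K_2$, obtaining matching number at most one, and combine this with triangle-freeness (read off directly from the hypothesis $\alpha(G)=2$) to conclude in a single combinatorial step that all edges of $\bar G$ share a vertex. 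Your route is a bit more economical, collapsing the paper's two structural stages into one; the paper's route has the mild advantage that it also derives triangle-freeness spectrally (via $C_3$), so its forward implication never actually invokes the hypothesis $\alpha(G)=2$.
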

\begin{proof}
  Suppose that $m_G[0,n-2]=2$.
  Then the complement graph $\bar{G}$ of $G$ satisfies $m_{\bar{G}}[2,n]=1$.
  By Lemma \ref{interlace}, $\bar{G}$ does not contain a subgraph isomorphic to $P_4$ or $C_3$, because they have two Laplacian eigenvalues greater than or equal to $2$.
  Thus $\bar{G}$ is a disjoint union of trees of diameter less than equal to $2$.
  Thus we have
  \begin{equation*}
    \bar{G} \cong S_{m_1}\cup \cdots \cup S_{m_k} \cup (n-\sum_{i=1}^{k}m_i)K_1 ~~\text{for $m_i \geq 2$.}
  \end{equation*}
  Suppose that $\bar{G}$ contains two star graphs of order at least $2$.
  Since $S_m$ has the Laplacian spectrum $[0, 1^{m-2}, m]$, by Lemma \ref{interlace}, we have $m_{\bar{G}}[2,n]>1$,  which is a contradiction.
  Hence $\bar{G}$ is isomorphic to $S_m \cup (n-m)K_1$ for some $m \geq 2$.
  Since $\bar{S}_m=K_{m-1} \cup K_1$,
  we have
  \begin{equation*}
    G \cong (K_1 \cup K_{m-1}) \nabla K_{n-m} = K_1 \nabla K_{n-m} \nabla K_{m-1}.
  \end{equation*}
  Note that, by Lemma \ref{char-join}, we have
  \begin{equation*}
    \mu(G,x)=x(x-(n-m))(x-(n-1))^{m-2}(x-n)^{n-m}
  \end{equation*}
  Therefore, $m_G[0,n-2]=2$.
\end{proof}

For a connected graph $G$ of order $n \geq 3$, there are two types of graphs with $\alpha(G)=n-2$.
Let $V(G)\setminus\{u,v\}$ be a maximal independent set of $G$.
First, suppose that $u$ and $v$ are not adjacent.
We decompose $V(G)\setminus\{u,v\}$ into three disjoint sets as follows:
\begin{gather*}
  N_G(u)\cap N_G(v) = \{w_1,\ldots,w_q\}, \\
  N_G(u)\setminus (N_G(u)\cap N_G(v))=\{u_1,\ldots,u_p\}, \text{~~and} \\
  N_G(v)\setminus (N_G(u)\cap N_G(v))=\{v_1,\ldots,v_r\}.
\end{gather*}
Let $S_{p+q+1}$ and $S_{q+r+1}$ be star graphs with central vertices $u$ and $v$, respectively.
Then $G$ is given by identifying noncentral $q$ vertices of two star graphs $S_{p+q+1}$ and $S_{q+r+1}$ as in Figure \ref{Fig:b-star}(a).
We denote this type of graph $G$ by $B(p,q,r)$.

When $u$ and $v$ are adjacent, the second type of graph $G$ is obtained from $B(p,q,r)$ by adding an edge between $u$ and $v$ as in Figure \ref{Fig:b-star}(b).
We denote this graph by $B'(p,q,r)$.
We call such graphs {\it binary star graphs.}
Throughout this paper, we assume $p \geq r$.
Note that $B(0,1,0)$ and $B'(p,0,0)$ are star graphs, whose independence number is $n-1$.

\begin{figure}
  \centering
  \includegraphics[width=13cm]{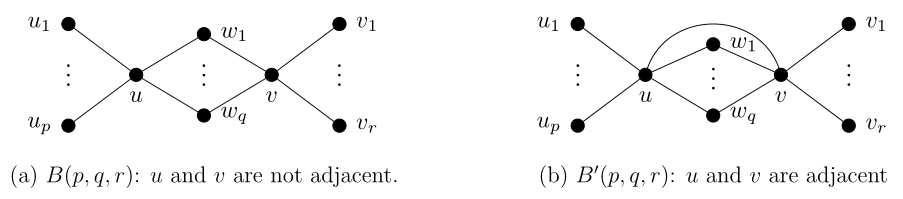}
  \caption{Binary star graphs}\label{Fig:b-star}
\end{figure}

It is well known that for $n\times n$ matrix $M$,
\begin{equation*}
  \det(xI-M)=x^n-a_1x^{n-1}+\cdots+(-1)^{n-1}a_{n-1}x+(-1)^na_n,
\end{equation*}
where $a_k$ is the sum of all the $k\times k$  principal minors of $M$ for $k=1,\ldots,n$.
The following lemma is an analogue of Lemma \ref{stardeg}.
\begin{lemma}\label{deg2}
  Let $G$ be a graph of order $n$ and let $u,v$ be two vertices in $V(G)$.
  If $N=\{w \in V(G) \,|\, N_G(w)=\{u,v\}\}$, then $2$ is a root of the characteristic polynomial of $L(G)$ with multiplicity greater than or equal to $|N|-1$.
\end{lemma}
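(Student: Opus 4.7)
The plan is to mimic Faria's construction for pendant vertices: exhibit $|N|-1$ linearly independent eigenvectors of $L(G)$ with eigenvalue $2$, each supported on $N$. Since every vertex $w\in N$ has degree exactly $2$ (with neighbors $u$ and $v$), the row of $L(G)$ indexed by $w$ has a $2$ on the diagonal, $-1$ in columns $u$ and $v$, and $0$ elsewhere. This is exactly the structural feature that makes the construction work.

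Concretely, I would enumerate $N=\{w_{1},\ldots,w_{k}\}$ with $k=|N|$ and consider the subspace
\begin{equation*}
  W=\Bigl\{x\in\mathbb{R}^{V(G)}\,\Bigm|\, x_{y}=0\text{ for all }y\notin N,\ \sum_{i=1}^{k}x_{w_{i}}=0\Bigr\},
\end{equation*}
which has dimension $k-1$. For $x\in W$, I would verify $L(G)x=2x$ entry by entry: on a coordinate $w_{i}\in N$ one gets $2x_{w_{i}}-x_{u}-x_{v}=2x_{w_{i}}$ since $x_{u}=x_{v}=0$; on coordinate $u$ (and symmetrically $v$) the only nonzero contributions come from neighbors lying in $N$, giving $-\sum_{i=1}^{k}x_{w_{i}}=0$ by the defining relation of $W$; and on any other vertex $y\notin\{u,v\}\cup N$ the coordinate is $0$ because no $w_{i}$ is adjacent to $y$ (as $N_{G}(w_{i})=\{u,v\}$). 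Hence $W$ is contained in the $2$-eigenspace of $L(G)$, which immediately gives the bound on multiplicity.

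I do not anticipate a genuine obstacle here; the only care required is to check the ``middle'' coordinates $u$ and $v$, where the condition $\sum x_{w_{i}}=0$ is essential, and to note that vertices outside $\{u,v\}\cup N$ contribute nothing because the definition of $N$ forbids any edges from $N$ to such vertices. Concluding, the multiplicity of $2$ as a root of $\mu(G,x)$ is at least $\dim W=|N|-1$.
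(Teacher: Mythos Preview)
Your argument is correct: the subspace $W$ of vectors supported on $N$ with zero coordinate sum is a $(|N|-1)$-dimensional subspace of the $2$-eigenspace of $L(G)$, and your entry-by-entry verification is complete. (It is worth noting explicitly that $u,v\notin N$, since $u\in N_{G}(u)$ is impossible in a simple graph; you rely on this when asserting $x_{u}=x_{v}=0$.)

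Your route differs from the paper's. The paper does not build eigenvectors; instead it observes that after ordering $V(G)$ so that $N=\{w_{1},\ldots,w_{q}\}$ comes first, the first $q$ rows of $L(G)-2I$ are identical, and hence every $k\times k$ principal minor with $k\ge n-q+2$ vanishes. This forces the bottom $q-1$ coefficients of $\mu(L(G)-2I,x)$ to be zero, so $x^{q-1}$ divides it. Your eigenvector construction is more transparent and arguably more natural for proving the lemma in isolation. The paper's principal-minor viewpoint, however, is not chosen idly: the same ``identical rows'' observation is reused in the proof of Theorem~\ref{char:B} to compute the surviving principal minors of $L(G)-2I$ (and of $L(G)-I$) explicitly, and thereby to pin down the coefficients $a_{2}$ and $a_{3}$ of the quartic factor. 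So the paper's method is part of a larger computational scheme, whereas yours gives a cleaner standalone proof of the multiplicity bound.
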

\begin{proof}
  Let $N=\{w_1,w_2,\ldots,w_q\}$.
  It suffices to show that $0$ is a root of the characteristic polynomial of $L(G)-2I$ with multiplicity greater than or equal to $q-1$.
  The matrix $L(G)-2I$ has the form as in Figure \ref{Fig:L:deg}.
  \begin{figure}[h!]
    \centering
    \includegraphics[width=6.5cm]{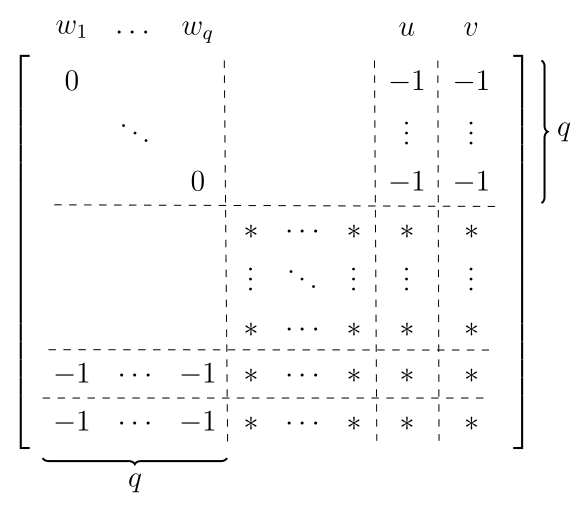}
    \caption{The matrix $L(G)-2I$ }\label{Fig:L:deg}
  \end{figure}

  Let $n-(q-2) \leq k \leq n$.
  Since the first $q$ rows are identical, all the $k \times k$ principal minors of $L(G)-2I$ are zero.
  Hence the coefficients of $x^{n-k}$ in the characteristic polynomial of $L(G)-2I$ are zeros for all $k$. In other words, $0$ is a root of the characteristic polynomial of $L(G)-2I$ with multiplicity greater than or equal to $q-1$.
\end{proof}
\begin{remark}
  By the same method, one can show that the multiplicity of $2$ as a root of the characteristic polynomial of $L(G)$ is greater than or equal to
  \begin{equation*}
    \sum_{u,v \in V(G)} \bigg( \big|\{ w \in V(G) | N_G(w)=\{u,v\} \}\big|-1 \bigg).
  \end{equation*}
\end{remark}

The next theorems give the characteristic polynomial of the Laplacian matrix for the binary star graphs.
\begin{theorem}\label{char:B}
  Let $G$ be the binary star graph $B(p,q,r)$ of order $n=p+q+r+2$. Then the characteristic polynomial of $L(G)$ is
  \begin{equation*}
    \mu(G,x)=x(x-1)^{p+r-2}(x-2)^{q-1}(x^4-a_1x^3+a_2x^2-a_3x+a_4),
  \end{equation*}
  where
  \begin{align*}
    a_1= & \,2q+p+r+4, \\
    a_2= & \,q^2+(p+r)q+pr+3(2q+p+r)+5, \\
    a_3= & \,2(q^2+pq+rq+pr+3q+p+r+1), ~~\text{and} \\
    a_4= & \,(p+q+r+2)q.
  \end{align*}
\end{theorem}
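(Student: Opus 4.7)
The plan is to peel off three easy factors using lemmas already proved in the paper and then pin down the remaining quartic by reducing to a small quotient-matrix computation. Since $L(G)$ has zero row sums, $0$ is a Laplacian eigenvalue, giving the factor $x$. The vertex $u$ is the centre of a pendant star of degree $p-1$ and $v$ is the centre of one of degree $r-1$, so the star degree of $G$ is $p+r-2$, and Lemma \ref{stardeg} yields the factor $(x-1)^{p+r-2}$. Every vertex $w_k$ satisfies $N_G(w_k)=\{u,v\}$, so Lemma \ref{deg2} contributes the factor $(x-2)^{q-1}$. Thus $\mu(G,x) = x(x-1)^{p+r-2}(x-2)^{q-1}f(x)$ for a monic polynomial $f(x)$ of degree $4$, and the task is to compute $f$.

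To get $f$, partition $V(G)$ into the five cells $\{u\}$, $\{v\}$, $\{u_1,\dots,u_p\}$, $\{v_1,\dots,v_r\}$, $\{w_1,\dots,w_q\}$; the number of neighbours in cell $j$ of any vertex in cell $i$ depends only on $(i,j)$, so the subspace of vectors constant on each cell is $L(G)$-invariant. Any vector supported on a single cell and summing to zero there is an eigenvector of $L(G)$: those from the $u_i$-cell and $v_i$-cell have eigenvalue $1$ (with multiplicities $p-1$ and $r-1$), while those from the $w_k$-cell have eigenvalue $2$ (with multiplicity $q-1$); together these account for $n-5$ eigenvalues. The remaining five eigenvalues are the eigenvalues of the $5\times 5$ quotient matrix
\begin{equation*}
Q=\begin{pmatrix} p+q & 0 & -p & 0 & -q\\ 0 & q+r & 0 & -r & -q\\ -1 & 0 & 1 & 0 & 0\\ 0 & -1 & 0 & 1 & 0\\ -1 & -1 & 0 & 0 & 2 \end{pmatrix},
\end{equation*}
one of which is $0$ and the other four of which are the roots of $f$. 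Expanding $\det(xI-Q)$ by cofactors along rows $3$ and $4$ (each with only two nonzero entries) reduces the problem to a $3\times 3$ determinant, whose expansion and subsequent division by $x$ gives the claimed formula for $f$.

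As independent consistency checks, the sum of the four roots of $f$ must equal $\operatorname{tr} L(G) - (p+r-2) - 2(q-1) = 2(p+2q+r) - (p+r+2q-4) = 2q+p+r+4 = a_1$, and the product of the four roots must equal $n\,\tau(G)/2^{q-1}$; since every spanning tree of $G$ uses all $p+r$ pendant edges plus a spanning tree of the induced $K_{2,q}$ on $\{u,v,w_1,\dots,w_q\}$, we have $\tau(G) = q\cdot 2^{q-1}$ by the standard formula $\tau(K_{2,q})=q\cdot 2^{q-1}$, giving $a_4 = nq$ via the Matrix-Tree Theorem. The only genuinely laborious step is the parameter-heavy bookkeeping in the $5\times 5$ determinant expansion needed to confirm the middle coefficients $a_2$ and $a_3$; the rest is a direct application of the preparatory lemmas and standard facts.
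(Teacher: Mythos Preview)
Your approach is correct and genuinely different from the paper's. Both proofs start the same way---using Lemma~\ref{stardeg} and Lemma~\ref{deg2} to peel off $x(x-1)^{p+r-2}(x-2)^{q-1}$---but they diverge in how they determine the quartic. The paper never writes down a quotient matrix; instead it finds the four coefficients one at a time: $a_1$ from the trace of $L(G)$, $a_4$ from the Matrix-Tree Theorem together with a direct count of spanning trees, and then $a_2,a_3$ from a pair of linear equations obtained by evaluating certain principal-minor sums of $L(G)-2I$ and $L(G)-I$ and matching them against the corresponding coefficients of $\mu(G,x+2)$ and $\mu(G,x+1)$. The paper also treats the boundary situations $p=r=0$ and $p>0,\,r=0$ separately, reducing the last case to a cubic and then checking that $(x-1)\cdot(\text{cubic})$ agrees with the quartic.

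Your equitable-partition route is tidier: the five-cell partition is equitable, the subspace of cell-constant vectors is $L(G)$-invariant, and its orthogonal complement decomposes exactly into the $(p-1)+(r-1)$ eigenvectors for $1$ and the $q-1$ eigenvectors for $2$, so the remaining spectrum is captured in one shot by $\det(xI-Q)$. This buys you all four coefficients from a single determinant expansion and makes the trace and spanning-tree facts into optional sanity checks rather than load-bearing steps. The paper's method, by contrast, avoids any parameter-laden determinant and gives the individual coefficients combinatorial meaning (trace, spanning trees, specific minor sums), at the price of handling the degenerate cases by hand. One small caveat on your side: your five-cell description and the phrase ``pendant star of degree $p-1$'' presuppose $p\ge 1$ and $r\ge 1$; when $p=0$ or $r=0$ the relevant cell is empty and $Q$ shrinks, so you should either note (as the paper does) that these boundary cases follow by a direct check, or observe that the quartic then acquires an extra factor of $x-1$ consistent with the stated formula.
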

\begin{proof}
  If $p=0$ and $r=0$,
  then $G$ is isomorphic to the complete bipartite graph $K_{2,n-2}$. Thus
  \begin{equation*}
    \mu(G,x)=x(x-2)^{q-1}(x-q)(x-n).
  \end{equation*}
  We now consider the case $p \geq r \neq 0$.
  Label the vertices of $G$ as shown in Figure \ref{Fig:b-star}(a).
  Then the Laplacian matrix of $G$ has the form as in Figure \ref{Fig:L:B}.
  \begin{figure}[h!]
    \centering
    \includegraphics[width=9.5cm]{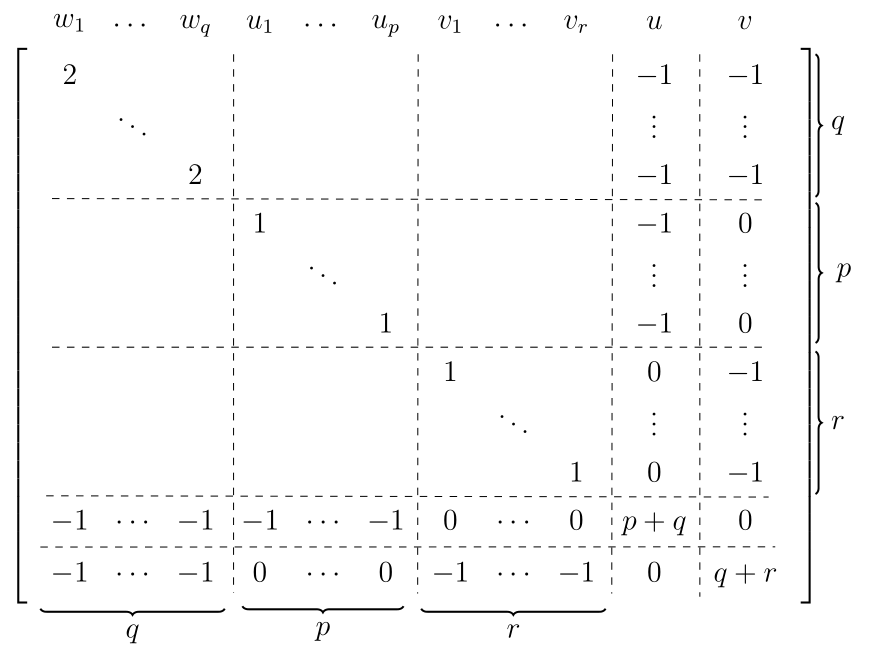}
    \caption{The Laplacian matrix of $B(p,q,r)$}\label{Fig:L:B}
  \end{figure}

  By Lemma \ref{stardeg} and Lemma \ref{deg2},
  the characteristic polynomial of $L(G)$ is of the form
  \begin{align*}
      \mu(G,x)=&\, x (x-1)^{p+r-2} (x-2)^{q-1} (x^4 -a_1x^3 +a_2x^2 -a_3x +a_4)\\
      =&\,x^n-(2q+p+r-4+a_1)x^{n-1}+\cdots+(-1)^{n-5}2^{q-1}a_4x.
    \end{align*}
  Since the trace of $L(G)$ is $4q+2p+2r$, we have
  \begin{equation*}
     a_1=2q+p+r+4.
  \end{equation*}
  In order to find $a_4$, we use the fact that the number of spanning trees multiplied by $n$ is equal to the product of all nonzero eigenvalues, which is $2^{q-1}a_4$.
  We proceed by counting the number of spanning trees of the binary star graph $B(p,q,r)$.
  Every spanning tree of $B(p,q,r)$ must contain
  \begin{enumerate}
    \item [(i)] all pendant edges,
    \item [(ii)] both of $uw_i$ and $w_iv$ for exactly one vertex $w_i$, and
    \item [(iii)] one of $uw_j$ or $w_jv$ but not both for $w_j\neq w_i$.
  \end{enumerate}
  The number of choices for (ii) is $q$ and the number of choices for (iii) is $2^{q-1}$.
  Hence there are $2^{q-1}q$ spanning trees.
  Therefore
  \begin{equation*}
    a_4=(p+q+r+2)q.
  \end{equation*}
  In order to find $a_2$ and $a_3$,
  we consider the matrices $L(G)-2I$ and $L(G)-I$.
  The characteristic polynomials of $L(G)-2I$ and $L(G)-I$ are
  \begin{equation*}
    \mu(L(G)-2I,x)=\mu(G,x+2)=x^n+\cdots+(2^5-2^4a_1+2^3a_2-2^2a_4+2a_4)x^{q-1}
  \end{equation*}
  and
  \begin{equation*}
    \mu(L(G)-I,x)=\mu(G,x+1)=x^n+\cdots+(-1)^{q-1}(1-a_1+a_2-a_3+a_4)x^{p+r-2},
  \end{equation*}
  respectively.

  Now, we compare the coefficient of $x^{q-1}$ in $\mu(L(G)-2I,x)$ and the sum of all $(n-(q-1))\times(n-(q-1))$ principal minors of $L(G)-2I$.
  Since the first $q$ rows of $L(G)-2I$ are identical and $n-(q-1)=p+r+3$, the principal minors that contribute to the sum have the following form:

  \begin{figure}[h!]
    \centering
    \includegraphics[width=9.5cm]{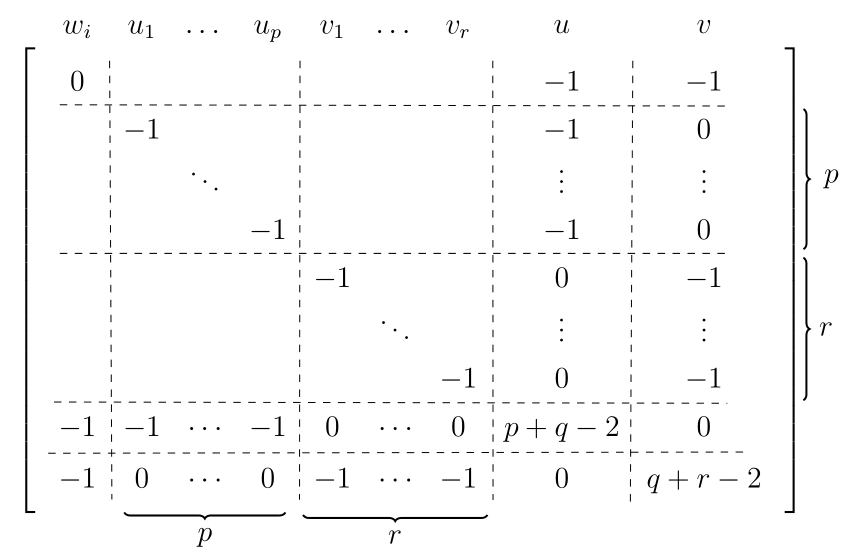}
  \end{figure}

  Since the determinant of the above matrix is $(-1)^{p+r+1}(2p+2r+2q-4)$, the sum of all the $(n-(q-1))\times(n-(q-1))$ principal minors of $L(G)-2I$ is $(-1)^{p+r+1}q(2p+2r+2q-4)$.
  Hence we obtain an equation
  \begin{equation}\label{eq1}
    2^5-2^4a_1+2^3a_2-2^2a_3+2a_4=q(2p+2r+2q-4).
  \end{equation}
  Now, we calculate the $(n-(p+r-2))\times(n-(p+r-2))$ principal minors of $L(G)-I$.
  By the same token, it is enough to consider the principal minors of the following form:
  \begin{figure}[h!t!]
    \centering
    \includegraphics[width=8cm]{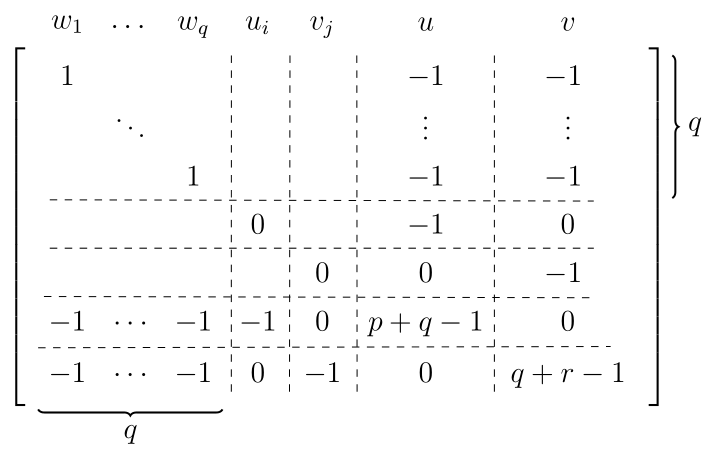}
  \end{figure}

  Since the determinant of the above matrix is $1$, the sum of all the $(n-(p+r-2))\times(n-(p+r-2))$ principal minors of $L(G)-I$ is $pr$, which gives
  \begin{equation}\label{eq2}
    1-a_1+a_2-a_3+a_4=pr.
  \end{equation}
  By the equations (\ref{eq1}) and (\ref{eq2}), we obtain
  \begin{equation*}
    a_2=q^2+(p+r)q+pr+3(2q+p+r)+5
  \end{equation*}
  and
  \begin{equation*}
    a_3= 2(q^2+pq+rq+pr+3q+p+r+1).
  \end{equation*}

  Finally, suppose that $p>0$ and $r=0$.
  Then the Laplacian matrix $L(G)$ is of the following form:
  \begin{figure}[h!t!]
    \centering
    \includegraphics[width=8cm]{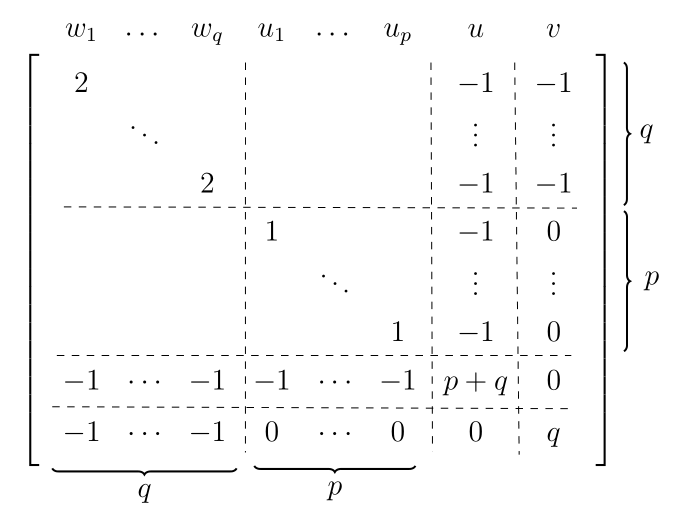}
  \end{figure}

  By Lemma \ref{stardeg} and \ref{deg2}, the characteristic polynomial of $L(G)$ is of the form
  \begin{align*}
    \mu(G,x)=\,&x(x-1)^{p-1}(x-2)^{q-1}(x^3-b_1x^2+b_2x-b_3)\\
    =&\,x^n-(2q+p-3+b_1)x^{n-1}+\cdots+(-1)^{n-3}2^{q-1}b_3x.
  \end{align*}
  Since the trace of $L(G)$ is $4q+2p$, we have
  \begin{equation*}
    b_1=2q+p+3.
  \end{equation*}
  Since the number of spanning trees of $B(p,q,0)$ is $2^{q-1}q$,
  \begin{equation*}
    b_3=(p+q+2)q.
  \end{equation*}
  In order to find $b_2$, we consider the matrix $L(G)-2I$.
  The characteristic polynomial of $L(G)-2I$ is
  \begin{equation*}
    \mu(L(G)-2I,x)=\mu(G,x+2)=x^n-\cdots+2(2^3-2^2b_1+2b_2-b_3)x^{q-1}.
  \end{equation*}
   By the similar method used in the previous case, the sum of all the $(n-(q-1))\times(n-(q-1))$ principal minors of $L(G)-2I$ is $(-1)^{p+1}q(2q+2p-4)$. Thus
  \begin{equation*}
    b_2=q^2+pq+4q+2p+2.
  \end{equation*}
  Since
  \begin{equation*}
    x^4-a_1x^3+a_2x^2-a_3x+a_4=(x-1)(x^3-b_1x^2+b_2x-b_3),
  \end{equation*}
  the proof is complete.
\end{proof}

\begin{theorem}\label{char:B'}
  Let $G$ be the binary star graph $B'(p,q,r)$ of order $n=p+q+r+2$. Then the characteristic polynomial of $L(G)$ is
  \begin{equation*}
    x(x-1)^{p+r-2}(x-2)^{q-1}(x^4 -a'_1x^3 +a'_2x^2 -a'_3x +a'_4),
  \end{equation*}
  where
  \begin{align*}
    a'_1= & \,2q+p+r+6, \\
    a'_2= & \,q^2+(p+r)q+pr+4(2q+p+r)+13, \\
    a'_3= & \,2(q^2+pq+rq+pr)+5(2q+p+r)+12, ~~\text{and}\\
    a'_4= & \,q^2+(p+r)q+2(2q+p+r)+4.
  \end{align*}
\end{theorem}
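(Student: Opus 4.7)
The plan is to adapt the proof strategy of Theorem \ref{char:B} to $G=B'(p,q,r)$. The essential structural features that powered the earlier argument are still present: the center vertices $u$ and $v$ are the centers of pendant stars contributing star degree $p+r-2$, and the $q$ vertices $w_1,\ldots,w_q$ still share the common open neighborhood $\{u,v\}$ (the extra edge $uv$ is not incident to any $w_i$, so Lemma \ref{deg2} still applies as stated). Lemmas \ref{stardeg} and \ref{deg2} therefore yield
\begin{equation*}
\mu(G,x) = x(x-1)^{p+r-2}(x-2)^{q-1}\bigl(x^4-a_1'x^3+a_2'x^2-a_3'x+a_4'\bigr),
\end{equation*}
and it remains only to identify the four unknowns.

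I would determine the coefficients via four independent numerical conditions, each parallel to one used in Theorem \ref{char:B}. The trace $\operatorname{tr}(L(G)) = 2|E(G)| = 4q+2p+2r+2$ (one larger than in $B(p,q,r)$ because of the edge $uv$) yields $a_1' = 2q+p+r+6$. The Matrix-Tree Theorem gives $n\cdot\tau(G) = 2^{q-1}a_4'$, so I count spanning trees by conditioning on whether $uv$ lies in the tree: if it does, each $w_i$ contributes one of $\{uw_i, vw_i\}$, producing $2^q$ trees; if it does not, then exactly one $w_i$ must supply both of its incident edges (to provide the unique $u$-$v$ path), while the remaining $q-1$ vertices each contribute a single edge, producing $q\cdot 2^{q-1}$ trees. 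This gives $\tau(G) = 2^{q-1}(q+2)$ and hence $a_4' = (p+q+r+2)(q+2)$, which agrees with the stated expression.

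To pin down $a_2'$ and $a_3'$ I would mimic the approach of Theorem \ref{char:B} and examine the characteristic polynomials of $L(G)-2I$ and $L(G)-I$. The coefficient of $x^{q-1}$ in $\mu(G,x+2)$ equals both the fixed combination $2^5-2^4a_1'+2^3a_2'-2^2a_3'+2a_4'$ and a signed sum of the $(p+r+3)\times(p+r+3)$ principal minors of $L(G)-2I$; since the $q$ rows corresponding to $w_1,\ldots,w_q$ are identical in $L(G)-2I$ (zero diagonal, with $-1$ only at columns $u$ and $v$), the only nonzero such minors use exactly one $w$-row together with all $p+r+2$ non-$w$ rows, giving $q$ identical minors. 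A parallel analysis of the coefficient of $x^{p+r-2}$ in $\mu(G,x+1)$, exploiting the identical pendant rows, produces a second linear equation in $a_2',a_3'$. Combined with the already-known $a_1',a_4'$, this $2\times 2$ linear system determines all four coefficients.

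The main obstacle is the careful evaluation of these two principal-minor determinants. Although $L(B'(p,q,r))$ differs from $L(B(p,q,r))$ only in the diagonal entries at $u,v$ (each larger by $1$) and in the $(u,v),(v,u)$ off-diagonals (changed from $0$ to $-1$), these localized modifications shift the relevant determinants by nontrivial amounts that must be tracked case by case (with the small-parameter boundary cases $r=0$ and $r=p=0$ handled separately, as in the previous theorem). A cleaner alternative is to invoke the equitable $5$-part partition $\{u\}\cup\{v\}\cup\{w_1,\ldots,w_q\}\cup\{u_1,\ldots,u_p\}\cup\{v_1,\ldots,v_r\}$: mean-zero eigenvectors on each part reproduce the factor $(x-1)^{p+r-2}(x-2)^{q-1}$ automatically, and the remaining factor $x(x^4-a_1'x^3+a_2'x^2-a_3'x+a_4')$ is the characteristic polynomial of the corresponding $5\times 5$ Laplacian quotient matrix, which can be expanded directly and matched coefficientwise.
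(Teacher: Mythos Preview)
Your proposal is correct and follows essentially the same approach as the paper, whose proof is literally the single sentence ``The proof is similar to that of Theorem~\ref{char:B}.'' Your outline supplies exactly the details that sentence intends (trace for $a_1'$, spanning-tree count for $a_4'$ as in the paper's Remark~\ref{spantree}, and the two principal-minor equations for $a_2',a_3'$); the equitable-partition alternative you sketch at the end is a pleasant bonus but not needed.
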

\begin{proof}
  The proof is similar to that of Theorem \ref{char:B}.
\end{proof}
\begin{remark}\label{spantree}
  For the number of spanning trees of the binary star graph $B'(p,q,r)$, we consider two cases.
  There are $2^{q-1}q$ spanning trees not containing the edge $uv$ and there are $2^q$ spanning trees containing the edge $uv$.
  Therefore the total number of spanning trees of $B'(p,q,r)$ is $2^{q-1}q+2^q$.
\end{remark}
\begin{remark}
  The binary star graph $B'(p,0,r)$ with $q=0$ is isomorphic to the double star $S(p,r)$.
  In this case, Theorem \ref{char:B'} recovers Lemma \ref{char:d-star}.
\end{remark}
\begin{theorem}\label{thm:main}
  Let $G$ be a connected graph of order $n$ with $\alpha(G)=n-2$.
  Then $m_G[0,n-\alpha(G)]=\alpha(G)$ if and only if  $G$ is isomorphic to one of the following forms:
  \begin{enumerate}
    \item $B(p,q,r)$ for $p+q+r \geq 3$,
    \item $B'(p,q,r)$ for $q=0$ and $pr\geq2$, or
    \item $B'(p,q,r)$ for $q\neq 0$.
  \end{enumerate}
\end{theorem}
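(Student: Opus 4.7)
The plan is to exploit the explicit factorizations of $\mu(G,x)$ from Theorems \ref{char:B} and \ref{char:B'}, reducing the count $m_G[0,2]$ to a root-location problem for the remaining quartic (or cubic in degenerate subcases). Since the factors $x$, $(x-1)^{p+r-2}$ and $(x-2)^{q-1}$ already account for $n-4$ eigenvalues in $[0,2]$, the question becomes how many of the remaining four (or three) roots lie in $[0,2]$. The strategy is to evaluate the remaining factor at $x=0,1,2$, combine the sign pattern with the elementary bound $\lambda_1(G)\ge\Delta(G)+1>2$ and Ahanjideh's inequality $m_G[0,2]\ge\alpha(G)=n-2$, and read off the root distribution. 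By the classification preceding Theorem \ref{char:B}, every connected $G$ with $\alpha(G)=n-2$ is a binary star of type $B(p,q,r)$ (with $q\ge 1$ forced by connectedness) or $B'(p,q,r)$, so the two families are handled in parallel.

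For $G=B(p,q,r)$ with $p\ge r\ge 1$, substitution into Theorem \ref{char:B} gives $f(0)=nq>0$, $f(1)=-pr<0$, and $f(2)=q(n-4)>0$ whenever $pr\ge 1$ and $p+q+r\ge 3$. The sign pattern $(+,-,+,+)$ at $(0,1,2,+\infty)$, together with the nonnegativity of all roots of $f$, forces the number of roots in $[0,2]$ to be either $2$ or $4$; since $\lambda_1(G)\ge p+q+1>2$, the four-root configuration is excluded, leaving exactly two quartic roots in $[0,2]$ and hence $m_G[0,2]=n-2$. The degenerate subcases $r=0$ (cubic factor, given in the proof of Theorem \ref{char:B}) and $p=r=0$ (where $G\cong K_{2,q}$ and Lemma \ref{char-join} gives the full spectrum) follow the same template, with $p+q+r\ge 3$ precisely what forces $\lambda_1(G)>2$. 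The family $B'(p,q,r)$ is treated analogously: for $q\ge 1$ and $pr\ge 1$, Theorem \ref{char:B'} gives $f'(0)>0$, $f'(1)=-pr<0$, $f'(2)=q(n-2)>0$, and the bound $\lambda_1(G)\ge p+q+2>2$ combined with Ahanjideh again pins down $m_G[0,2]=n-2$.

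The remaining subcases are $q\ge 1$ with $pr=0$ (so $r=0$) and $q=0$. In the former a short computation shows that $f'(1)$ and the derivative of $f'$ at $x=1$ both vanish, so $(x-1)^2$ divides $f'(x)$; performing the division is expected to yield $f'(x)=(x-1)^2(x-q-2)(x-p-q-2)$ when $p\ge 1$ (and the specialization $(x-1)^2(x-n)^2$ when $p=r=0$, in which case $G\cong K_2\nabla\overline{K_q}$), with the two non-unit roots always exceeding $2$. In the latter, $G=S(p,r)$ and Lemma \ref{char:d-star} supplies the cubic $h(x)$ satisfying $h(0)=-n$, $h(1)=pr$, and $h(2)=2pr-p-r$; for $pr\ge 2$ the sign pattern $(-,+,+,+)$ combined with $\lambda_1(G)\ge p+2>2$ isolates exactly one cubic root in $[0,2]$, yielding $m_G[0,2]=n-2$.

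The principal obstacle is the quartic root-counting in the $pr\ge 1$ subcase: the sign pattern $(+,-,+,+)$ on its own is consistent with either two or four roots in $[0,2]$, and one must use the strict inequality $\lambda_1(G)>2$ to rule out the four-root configuration. The secondary task is the \emph{only if} direction on the borderline graphs excluded from the list -- namely $B(p,q,r)$ with $p+q+r\le 2$ (which, after discarding $P_3$ where $\alpha\ne n-2$, reduces to $P_4$ and $C_4$) and $B'(p,q,r)$ with $q=0$ and $pr\le 1$ (giving stars and $P_4$); a direct Laplacian spectral computation in each of these finitely many small graphs gives $m_G[0,2]=n-1$, confirming their exclusion.
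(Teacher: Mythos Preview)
Your proposal is correct and follows the same overall route as the paper: use the explicit factorizations of Theorems~\ref{char:B} and~\ref{char:B'} to reduce the eigenvalue count to a root-location problem for the residual quartic (or cubic), then settle that by sign evaluations. The one substantive difference is in how the roots above $2$ are pinned down. The paper evaluates the quartic at two further points ($p+q+1$ and $p+q+2$ for $B$, and $p+q+2$ and $p+q+3$ for $B'$), thereby locating $\lambda_1$ and $\lambda_2$ in explicit intervals; you instead invoke the general bound $\lambda_1(G)\ge\Delta(G)+1$ (Lemma~\ref{lem:Lspecdeg}(1)) to exclude the ``all four roots in $[0,2]$'' configuration. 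Your shortcut is perfectly valid for the present theorem (and the appeal to Ahanjideh's inequality is in fact superfluous once you have the sign pattern and $\lambda_1>2$), but note that the paper's extra evaluations yield the sharper upper bound $\lambda_1(G)<p+q+2$ (resp.\ $<p+q+3$), which is later used in the proof of Lemma~\ref{lem:degseq} to bound $d_1$. So the additional work in the paper's version is not wasted; if you adopt your streamlined argument here, you would need to supply that upper bound separately when you reach Section~4.
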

\begin{proof}
  We point out that we are assuming $p \geq r$.
  Suppose that $m_G[0,2]=n-2$, which is equivalent to $m_G(2,n]=2$.
  Let $V(G)\backslash\{u,v\}$ be an independent set of $G$.
  As mentioned before, there are two cases:
  \medskip

  \noindent\textbf{Case 1.} $B(p,q,r)$: $u$ and $v$ are not adjacent.

  Since $G$ is connected, $q$ cannot be zero.
  Suppose that $p+q+r <3$.
  Then the graph $B(0,1,0)=P_3$ has independence number $n-1=2$. The graphs $B(1,1,0)=P_4$ and $B(0,2,0)=C_4$ have exactly one Laplacian eigenvalue greater than 2.
  Thus we have a contradiction.

  We assume $p+q+r \geq 3$.
  First, we consider the case for $p=0$.
  By Theorem \ref{char:B}, the characteristic polynomial of $L(B(0,q,0))$ is
  \begin{equation*}
    x(x-2)^{q-1}(x-q)(x-n).
  \end{equation*}
  Thus $B(0,q,0)$ has two Laplacian eigenvalues greater than $2$ for $q \geq 3$.
  Now, we assume that $p\neq0$.
  Let $f(x)$ be the quartic polynomial factor of $\mu(B(p,q,r),x)$ in Theorem \ref{char:B}.
  Then
  \begin{equation*}
    \begin{split}
      &f(0)= q(p+q+r+2)>0, \\
      &f(1)= -pr\leq0, \\
      &f(2)=q(p+q+r-2)>0, \\
      &f(p+q+1)= -q(q+(p-r))<0,
     \end{split}
  \end{equation*}
  and
  \begin{multline*}
     f(p+q+2)=(p-r)^3+(p-r)^2(2q+2r+3) \\
    +(p-r)(q^2+q(2r+3)+r^2+4r+2)+r(2q+r+2)>0.
  \end{multline*}
  If $r=0$, then $1$ is a root of $f$ and the other three roots of $f$ lie in $(0,1)\cup(1,2)$, $(2,p+q+1)$ and $(p+q+1,p+q+2)$.
  If $r \neq 0$, then the roots of $f$ lie in $(0,1)$, $(1,2)$, $(2,p+q+1)$ and $(p+q+1,p+q+2)$.
  Moreover, $\lambda_3(B(p,q,r))=2$ and $$2< \lambda_2(B(p,q,r)) <p+q+1 < \lambda_1(B(p,q,r)) <p+q+2.$$
  Thus the graph $B(p,q,r)$ has exactly two Laplacian eigenvalues greater than $2$.
  Therefore, $G$ is isomorphic to $B(p,q,r)$ for $p+q+r \geq 3$.
  \medskip

  \noindent\textbf{Case 2.} $B'(p,q,r)$: $u$ and $v$ are adjacent.

  \textbf{Case 2a.}
  We consider the case $q=0$.
  Suppose that $pr<2$.
  The graph $B'(p,0,0)=S_{p+2}$ has independence number $n-1=p+1$.
  The graph $B'(1,0,1)=P_4$ contains at most one Laplacian eigenvalue greater than 2. Thus we have a contradiction.

  Assume that $pr\geq 2$.
  Then $B'(p,0,r)$ is the double star $S(p,r)$.
  By Lemma \ref{char:d-star}, the characteristic polynomial of $L(S(p,r))$ is
  \begin{equation*}
    x(x-1)^{p+r-2}(x^3-(p+r+4)x^2+(2p+2r+pr+5)x-(p+r+2)).
  \end{equation*}
  Let $g(x)=x^3-(p+r+4)x^2+(2p+2r+pr+5)x-(p+r+2)$.
  Then we obtain
  \begin{equation*}
  \begin{split}
    &g(0)=-(p+r+2)<0, \\
    &g(2)=(p-r)(2r-1)+2r(r-1) > 0,\\
    &g(p+2)=-r< 0, ~~\text{and}\\
    &g(p+3)=(p-r)^2+(p-r)(r+4)+4>0.
  \end{split}
  \end{equation*}
  Hence $\lambda_3(B'(p,0,r))=2$ and $$2< \lambda_2(B'(p,0,r)) <p+2< \lambda_1(B'(p,0,r)) <p+3.$$
  Thus $G$ is isomorphic to $B'(p,0,r)$ for $q=0$ and $pr\geq2$.

  \textbf{Case 2b.} Suppose that $q\neq 0$.
  If $r=0$, by Theorem \ref{char:B'}, we have
  \begin{equation*}
    \mu(G,x)=x(x-1)^{p}(x-2)^{q-1}(x-(q+2))(x-n).
  \end{equation*}
  Hence $B'(p,q,r)$ has two Laplacian eigenvalues greater than $2$.
  Suppose that $r\neq0$.
  Let $h(x)$ be the quartic polynomial factor of $\mu(B'(p,q,r),x)$ in Theorem \ref{char:B'}.
  Then we have
  \begin{equation*}
  \begin{split}
    & h(0)= (q+2)(p+q+r+2)>0, \\
    & h(1)=  -pr<0, \\
    & h(2)= q(p+q+r)>0,\\
    & h(p+q+2)= -pr<0,
    \end{split}
  \end{equation*}
  and
  \begin{multline*}
    h(p+q+3)=(p-r)^3+(p-r)^2(2q+2r+5)\\
    +(p-r)(q^2+q(2r+6)+r^2+5r+8)+(q+2)(q+2r+2)>0.
  \end{multline*}
  By Theorem \ref{char:B'}, we have $\lambda_3(B'(p,q,r))=2$ and $$2< \lambda_2(B'(p,q,r)) <p+q+2< \lambda_1(B'(p,q,r)) <p+q+3.$$
  Thus $B'(p,q,r)$ has exactly two eigenvalues in the interval $(2,n]$.
  It implies that $G$ is isomorphic to $B'(p,q,r)$ for $q \neq 0$.
\end{proof}

\begin{corollary}
  The algebraic connectivity of $B(p,q,r)$ and $B'(p,q,r)$ is less than 1 for $pr \neq 0$.
\end{corollary}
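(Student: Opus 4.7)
The algebraic connectivity is $\lambda_{n-1}(G)$, the second smallest Laplacian eigenvalue. Since a binary star graph with $pr \neq 0$ is connected (taking $q \geq 1$ for $B(p,q,r)$; if $q=0$ then $B(p,0,r)$ is disconnected and its algebraic connectivity equals $0 < 1$ trivially), the eigenvalue $\lambda_n(G) = 0$ is simple. So the strategy is to exhibit a single Laplacian eigenvalue strictly between $0$ and $1$ in each case; such an eigenvalue is then an upper bound for $\lambda_{n-1}(G)$, forcing $\lambda_{n-1}(G) < 1$. The plan is to recycle the sign evaluations already carried out in the proof of Theorem~\ref{thm:main}: the characteristic polynomial factors as $x(x-1)^{p+r-2}(x-2)^{q-1}$ times a lower-degree polynomial, and I apply the intermediate value theorem to that polynomial on $(0,1)$.

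For $G = B(p,q,r)$ with $pr \neq 0$ and $q \geq 1$, the quartic factor $f(x)$ from Theorem~\ref{char:B} satisfies $f(0) = q(p+q+r+2) > 0$ and $f(1) = -pr < 0$, both recorded in the proof of Theorem~\ref{thm:main}. Hence $f$ has a root in $(0,1)$, which is a Laplacian eigenvalue of $G$ strictly below $1$, so $\lambda_{n-1}(B(p,q,r)) < 1$.

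For $G = B'(p,q,r)$ with $pr \neq 0$, split into $q \geq 1$ and $q = 0$. When $q \geq 1$, the quartic factor $h(x)$ from Theorem~\ref{char:B'} satisfies $h(0) = (q+2)(p+q+r+2) > 0$ and $h(1) = -pr < 0$, again yielding a root in $(0,1)$. When $q = 0$, the graph is the double star $S(p,r)$, so by Lemma~\ref{char:d-star} the cubic factor $g(x) = x^3 - (n+2)x^2 + (2n+pr+1)x - n$ satisfies $g(0) = -n < 0$ and $g(1) = pr > 0$, producing once more a root in $(0,1)$. In either subcase $\lambda_{n-1}(B'(p,q,r)) < 1$.

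The main subtlety—rather than obstacle—is simply verifying that the root isolated in $(0,1)$ really is $\lambda_{n-1}(G)$ and not some larger eigenvalue. This is automatic: the remaining factors $x$, $(x-1)^{p+r-2}$, $(x-2)^{q-1}$ contribute only the eigenvalues $0$, $1$, $2$, and $0$ appears with multiplicity one (as $G$ is connected), so any eigenvalue strictly inside the open interval $(0,1)$ must equal $\lambda_{n-1}(G)$. The role of the hypothesis $pr \neq 0$ is transparent from the computations: each of $f(1)$, $h(1)$, $g(1)$ evaluates to $\pm pr$, and vanishing of $pr$ would place this eigenvalue at $1$ rather than below it.
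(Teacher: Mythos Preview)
Your proof is correct and follows essentially the same approach as the paper: locate a root of the quartic (or cubic) factor in $(0,1)$ via the sign changes $f(0)>0$, $f(1)=-pr<0$ (and the analogues for $h$ and $g$), exactly as recorded in the proof of Theorem~\ref{thm:main} and its $q=0$ addendum. The paper's version is terser---it simply says the $pqr\neq 0$ cases follow from that proof and then singles out $g(1)=pr>0$ for the double star---but the underlying argument is identical.
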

\begin{proof}
  From the proof of Theorem \ref{thm:main}, we can deduce that the second smallest Laplacian eigenvalue is less than 1 for
  $B(p,q,r)$ and $B'(p,q,r)$ when $pqr \neq 0$. It is easy to see that the algebraic connectivity of $B'(p,0,r)$ is less than 1 for $pr\neq0$ from the fact that $g(1)=pr>0$, where
  $g(x)$ is the cubic polynomial factor of $\mu(B'(p,0,r),x)$.
\end{proof}

\section{Binary star graphs are DLS}

We say two graphs $G$ and $G'$ are {\it $L$-cospectral} if they have the same Laplacian spectrum. Clearly, two isomorphic graphs are $L$-cospectral. A classical question in spectral graph theory is whether the two $L$-cospectral graphs are isomorphic or not. A graph is said to be \emph{determined by its Laplacian spectrum}(DLS, for short) if there is no other non-isomorphic $L$-cospectral graph. In this section, we prove that when $p=r$, $B(p,q,p)$ and $B'(p,q,p)$ are DLS. We denote by \[ \deg(G)=(d_1, d_2, \ldots, d_n) \]
the nonincreasing degree sequence of a graph $G$. When there are repetitions, we use the superscripts to indicate the number of repetitions.
We collect some known results.

\begin{lemma}[\cite{Oliveira,vanDam}]\label{lem:Lcos}
For any graph, the following can be determined by its Laplacian spectrum.
\begin{enumerate}
  \item The number of vertices.
   \item The number of edges.
   \item The number of connected components.
   \item The number of spanning trees.
   \item The sum of the squares of degrees of vertices.
\end{enumerate}
\end{lemma}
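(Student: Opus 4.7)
The plan is to recognize that each of the five items is an elementary spectral invariant, so the proof is really a short catalogue of identities rather than a single unified argument. Item (1) is immediate, since the Laplacian spectrum is by definition a multiset of size $|V(G)|$.

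For (2) and (5), I would use traces of powers of $L(G)$. The first trace gives $\sum_i \lambda_i(G)=\mathrm{tr}\,L(G)=\sum_v d_v=2|E(G)|$, so the edge count is recovered from the spectrum. Squaring, $\mathrm{tr}\,L(G)^2=\sum_{i,j}L(G)_{ij}^2=\sum_v d_v^2+2|E(G)|$, since each off-diagonal entry corresponding to an edge is $-1$; on the spectral side this equals $\sum_i\lambda_i(G)^2$, and subtracting twice the edge count from (2) yields $\sum_v d_v^2$.

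For (3), I would appeal to the classical fact that the multiplicity of $0$ in the Laplacian spectrum equals the number of connected components: after block-diagonalizing $L(G)$ along the components, each block is the Laplacian of a connected graph, whose kernel is one-dimensional and spanned by the all-ones vector, so the total nullity equals the number of blocks. For (4), Kirchhoff's Matrix-Tree theorem expresses the number of spanning trees as $\tau(G)=\tfrac{1}{n}\prod_{i=1}^{n-1}\lambda_i(G)$, which is manifestly a spectral invariant once $n$ is known from (1).

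There is no serious obstacle: every item reduces to a single standard identity. The only input from outside pure linear algebra is the Matrix-Tree theorem needed for (4); everything else is a direct trace or nullity computation, which is why the lemma is traditionally stated as a compact package of folklore facts with references rather than a proof.
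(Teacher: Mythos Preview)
Your argument is correct and is the standard justification for each item. Note that the paper does not supply its own proof of this lemma; it is stated with citations to \cite{Oliveira,vanDam} and used as a black box, so there is no in-paper proof to compare against.
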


\begin{lemma}[\cite{Grone-g2,Li-Pan,Guo}]\label{lem:Lspecdeg}
Let $G$ be a connected graph with $n\geq 4$ vertices. Then
\begin{enumerate}
\item $d_1\le \lambda_1(G) -1$.
\item $d_2\le \lambda_2(G) $.
\item $d_3\le \lambda_3(G)+1 $.
\end{enumerate}
\end{lemma}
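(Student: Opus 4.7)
The plan is to derive each of the three inequalities from the edge-deletion interlacing of Lemma \ref{interlace}, applied to a spanning subgraph $H \subseteq G$ whose Laplacian spectrum can be made fully explicit using results already established in the paper.

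For (1), I would let $v \in V(G)$ have $\deg v = d_1$ and take $H$ to be the spanning subgraph whose only edges are those incident with $v$. Then $H \cong K_{1,d_1} \cup (n-1-d_1) K_1$, and its Laplacian spectrum is $\{0^{n-d_1},\,1^{d_1-1},\,d_1+1\}$. Since $G$ differs from $H$ only by added edges, Lemma \ref{interlace} yields $\lambda_1(G) \geq \lambda_1(H) = d_1 + 1$, which is (1).

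For (2), I would pick $v_1, v_2 \in V(G)$ realizing the two largest degrees $d_1, d_2$, and take $H$ to be the spanning subgraph containing every edge incident with $v_1$ or $v_2$. Setting $q = |N_G(v_1) \cap N_G(v_2)|$, $p = d_1 - q$, and $r = d_2 - q$, the non-trivial component of $H$ is precisely a binary star graph: $B(p,q,r)$ when $v_1 v_2 \notin E(G)$ and $B'(p,q,r)$ when $v_1 v_2 \in E(G)$, accompanied by isolated vertices. Theorems \ref{char:B} and \ref{char:B'} then furnish an explicit quartic factor $f$ of the characteristic polynomial of $L(H)$ whose roots are the nontrivial Laplacian eigenvalues of $H$, and the heart of the argument is to check $f(d_2) \leq 0$. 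Combined with the estimate (already observed in the proof of Theorem \ref{thm:main}) that the largest root of $f$ is greater than $p+q+1 \geq d_2$, this would force $\lambda_2(H) \geq d_2$, and Lemma \ref{interlace} would then deliver $\lambda_2(G) \geq \lambda_2(H) \geq d_2$. For (3), the analogous construction is to pick three vertices $v_1,v_2,v_3$ realizing $d_1, d_2, d_3$ and take $H$ to be the spanning subgraph consisting of edges incident with at least one of them; one would analyze the resulting ``three-starred'' subgraph to obtain $\lambda_3(G) \geq d_3 - 1$, with the extra $+1$ slack in the lemma reflecting the fact that $\lambda_3(H)$ may sit at $d_3 - 1$ rather than $d_3$ in this construction.

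The main obstacle will be the explicit sign analysis of the polynomial factors at the critical values $x = d_2$ (for (2)) and $x = d_3-1$ (for (3)), especially handling the case distinctions imposed by whether the selected top-degree vertices are pairwise adjacent and by the overlap structure of their neighborhoods. The condition $n \geq 4$ should be needed precisely to exclude small degenerate configurations (for instance $P_3$, where (2) would already be vacuous or tight), and the $+1$ slack in part (3) warns that the naive spanning-subgraph approach is not quite sharp, so some additional bookkeeping will be required in that case.
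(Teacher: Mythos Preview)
The paper does not prove this lemma at all: it is quoted with citations to Grone--Merris, Li--Pan, and Guo, and no argument appears in the text. So there is no ``paper's own proof'' to compare your proposal against.

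As for the proposal on its own merits: part (1) is correct and standard. Parts (2) and (3), however, remain programmatic rather than proved. In (2) your parametrization already slips in the adjacent case (when $v_1v_2\in E(G)$ one has $p=d_1-q-1$ and $r=d_2-q-1$, not $d_i-q$), and more seriously the ``heart of the argument''---the sign of the quartic factor at $x=d_2$---is asserted, not checked; the analysis in Theorem~\ref{thm:main} only places $\lambda_2(H)$ in $(2,\,p+q+1)$, which does not by itself yield $\lambda_2(H)\ge q+r$. For (3) the sketch is thinner still: the ``three-starred'' spanning subgraph is not a graph whose Laplacian polynomial the paper computes, so you would be building an entirely new calculation with a case split over the adjacency pattern among $v_1,v_2,v_3$ and the overlaps of their neighborhoods. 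The proofs in the cited references proceed by different techniques (quotient matrices of equitable partitions and related eigenvalue inequalities), not via explicit spanning-subgraph characteristic polynomials; your idea of recycling the paper's binary-star machinery is appealing but would require substantial additional work to carry through.
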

\begin{lemma}[{\cite[Table 3]{HS}}]\label{lem:det:2}
  Any graph with at most $5$ vertices is DLS.
\end{lemma}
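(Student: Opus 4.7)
The plan is to establish the claim by exhaustive enumeration, since there are only finitely many graphs on at most $5$ vertices. Up to isomorphism the numbers are $1$, $2$, $4$, $11$, and $34$ for $n=1,\ldots,5$, for a total of $52$ graphs. It suffices to verify that no two non-isomorphic graphs of the same order $n \leq 5$ share a Laplacian spectrum.

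To minimize computation I would first invoke Lemma~\ref{lem:Lcos}: any two $L$-cospectral graphs must agree in the number of vertices, edges, connected components, spanning trees, and the sum of squares of vertex degrees. For each $n \leq 5$ I would tabulate all graphs together with these five invariants; if distinct isomorphism classes always produce distinct tuples, then DLS follows immediately. For $n=1,2,3$ this is trivial, and for $n=4$ the eleven graphs are already separated by their degree sequences alone. For $n=5$ a systematic inspection of the thirty-four graphs can be organised by grouping them according to the number of edges (which ranges from $0$ to $10$) and then refining by components and degree data.

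A convenient shortcut is that cospectrality is preserved under complementation: since $\lambda_i(\bar G) = n - \lambda_{n-i}(G)$ for $1 \leq i \leq n-1$, a graph $G$ is DLS if and only if $\bar G$ is, which roughly halves the work. If any two non-isomorphic graphs of order $n \leq 5$ happen to share all five invariants of Lemma~\ref{lem:Lcos}, I would compute their full Laplacian characteristic polynomials directly and verify that the remaining coefficients differ. The main obstacle is not conceptual but bookkeeping, namely producing the finite table of $52$ graphs without error; the outcome of this verification is exactly what is recorded in \cite[Table~3]{HS}.
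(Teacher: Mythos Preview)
Your approach is correct and is essentially the same as what the paper (via its citation of \cite{HS}) relies on: the paper does not prove this lemma at all but simply quotes the exhaustive tabulation in \cite[Table~3]{HS}, which is exactly the finite check you outline. The only remark is that your write-up is a plan rather than a completed verification; to make it a proof you would need to actually produce (or point to) the table, which is precisely what \cite{HS} supplies.
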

A tree is called {\it double starlike} if it has exactly two vertices of degree greater than two.
Denote by $H(p,n,q)$ the double starlike tree obtained by attaching $p$ pendant vertices to one pendant vertex of $P_n$ and $q$ pendant vertices to the other pendant vertex of $P_n$.
\begin{figure}[h!]
  \centering
  \includegraphics[width=8cm]{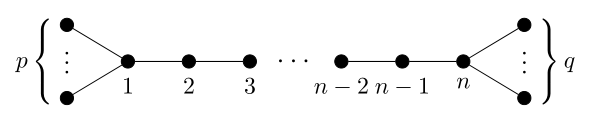}
  \caption{The double starlike tree $H(p,n,q)$ }\label{fig:d-starlike}
\end{figure}
\begin{lemma}[\cite{Liu,Lu}]\label{lem:det:1}
  Every double starlike tree $H(p,n,q)$ is DLS.
\end{lemma}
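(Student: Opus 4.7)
The plan is to argue that any graph $G$ which is $L$-cospectral with $H(p,n,q)$ must be isomorphic to it, in three stages: reduction to a tree, identification of the degree sequence, and structural rigidity. Set $N = p+q+n$. By Lemma \ref{lem:Lcos}, $G$ has $N$ vertices, $N-1$ edges, and exactly one connected component, so $G$ is a tree on $N$ vertices; moreover the sum of squared degrees of $G$ is forced to equal $(p+1)^2+(q+1)^2+4(n-2)+(p+q)$.

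Next I would identify the degree sequence of $G$. Using that $H(p,n,q)$ carries two pendant stars of degrees $p-1$ and $q-1$, Lemma \ref{stardeg} shows that $1$ is a root of $\mu(H(p,n,q),x)$ with multiplicity at least $p+q-2$. One then estimates the three largest Laplacian eigenvalues of $H(p,n,q)$, and Lemma \ref{lem:Lspecdeg} yields upper bounds on the three largest degrees of $G$. Combined with the preserved sum of squared degrees and the total edge count, a short arithmetic argument forces the degree sequence of $G$ to coincide with that of $H(p,n,q)$, namely $(p+1,\,q+1,\,2^{n-2},\,1^{p+q})$; the very small cases $N\le 5$ are dispatched at once by Lemma \ref{lem:det:2}.

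The third and main step is to show that among trees with this degree sequence, only $H(p,n,q)$ can be $L$-cospectral with $H(p,n,q)$. Any such tree $T$ has two vertices $u,v$ of degrees $p+1$ and $q+1$ joined by a unique path of some length $k\ge 1$; every leaf of $T$ is attached to $u$ or $v$ through an \emph{arm} of length $\ge 1$; and the $n-2$ degree-$2$ vertices of $T$ are distributed between the central $uv$-path and the arm interiors. The tree $H(p,n,q)$ is the unique candidate with $k=n-1$ and all arms of length $1$. To exclude the other topologies, I would compute $\mu(T,x)$ by iterated edge deletion on the arms, starting from the pendants and walking inwards, and compare the resulting coefficients (equivalently, the power sums $\sum_i\lambda_i^r$ for small $r$) with those of $\mu(H(p,n,q),x)$.

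The main obstacle is clearly this last step, since spectral discrimination among trees sharing a degree sequence but differing in topology is delicate: the sum of squared degrees alone will not see the difference between, say, a degree-$2$ vertex on the $uv$-path and one inside an arm. A cleaner route I would try is to establish a monotonicity statement --- for instance a Kelmans-type transformation that moves a degree-$2$ vertex from an arm onto the central path and strictly changes one of $\lambda_2$ or $\sum_i\lambda_i^3$ --- thereby identifying $H(p,n,q)$ as a spectral extremum. Alternatively, one may adapt known Laplacian-spectral uniqueness results for caterpillars, since $H(p,n,q)$ is the unique caterpillar with the prescribed degree sequence.
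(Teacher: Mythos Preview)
The paper does not prove this lemma; it is quoted from \cite{Liu,Lu} and used as a black box, so there is no in-paper argument to compare against.  Your outline for the first two stages is the standard one and is essentially what those references do: Lemma~\ref{lem:Lcos} forces any $L$-cospectral mate to be a tree on the same number of vertices, and the combination of Lemma~\ref{lem:Lspecdeg} with the conserved quantities $\sum_i d_i$ and $\sum_i d_i^2$ pins down the degree sequence once one controls $\lambda_1$ and $\lambda_3$ of $H(p,n,q)$.  (Some care is needed when $p=1$ or $q=1$, since then one of your two ``high-degree'' vertices actually has degree $2$.)

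The third stage is where your proposal stops being a proof, and two of your three suggested routes are in fact blocked.  First, for any tree one checks directly that $\mathrm{tr}(L^3)=\sum_i d_i^3+3\sum_i d_i^2$, so the power sums $\sum_i\lambda_i^r$ for $r\le 3$ are determined by the degree sequence alone and cannot separate the candidate trees; in particular the Kelmans-type transfer you propose cannot strictly change $\sum_i\lambda_i^3$.  Second, your claim that $H(p,n,q)$ is the unique caterpillar with degree sequence $(p+1,\,q+1,\,2^{n-2},\,1^{p+q})$ is false: for $p=q=3$ and $n=5$, take the path on five vertices $u,v,w_1,w_2,w_3$ (in that order) and attach three pendants at $u$, two at $v$, and one at $w_3$; this is a caterpillar with the same degree sequence as $H(3,5,3)$ but the two degree-$4$ vertices are adjacent rather than at distance $4$.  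What remains is the hope that $\lambda_2$ (or some invariant of order $\ge 4$) moves strictly under an arm-to-path transfer; this may well be true, but you have not argued it, and supplying such an argument is precisely the substantive content of the cited papers.
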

Thus, $B(p,1,r)=H(p,3,r)$ and $B'(p,0,r)=H(p,2,r)$ are DLS.
In what follows, we assume $q \geq 2$ for $B(p,q,p)$ and $q \geq 1$ for $B'(p,q,p)$.
In the following lemma, we show that any graph $L$-cospectral with $B(p,q,p)$ has the same degree sequence.
\begin{lemma}\label{lem:degseq}
Let $G= B(p,q,p)$ with $n=2p+q+2\ge 8$ and $q\ge 2$. Suppose that a graph $G'$ is $L$-cospectral with $G$. Then $$\deg(G')=\deg(G)=(\,p+q,\, p+q,\, 2^q,\, 1^{2p}\,).$$
\end{lemma}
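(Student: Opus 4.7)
The plan is to derive the degree sequence of $G'$ from the Laplacian spectrum by combining the invariants of Lemma \ref{lem:Lcos} with the top-degree bounds of Lemma \ref{lem:Lspecdeg}. I parametrize by $a = d_1(G')$ and $b = d_2(G')$, express the multiplicities of the remaining small degrees as polynomials in $a,b$, and then show that integer non-negativity forces $a = b = p+q$.

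By Theorem \ref{char:B} and Lemma \ref{lem:Lcos}, $G'$ is connected of order $n = 2p+q+2$, has $e(G') = 2(p+q)$ edges, and satisfies $\sum_v d(v)^2 = 2(p+q)^2 + 4q + 2p$. The root-localization of the quartic factor in the proof of Theorem \ref{thm:main} gives $\lambda_1 < p+q+2$, $\lambda_2 < p+q+1$, and $\lambda_3 = 2$, so Lemma \ref{lem:Lspecdeg} forces $a, b \leq p+q$ and $d_i(G') \leq 3$ for $i \geq 3$. In the main case $a, b \geq 4$, let $n_k$ denote the number of vertices of $G'$ of degree $k \in \{1,2,3\}$; the identities for vertex count, degree sum, and squared-degree sum are uniquely solvable for $n_1, n_2, n_3$. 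Writing $\alpha = p+q-a$, $\beta = p+q-b$, $\sigma = \alpha+\beta$, and $\rho = \alpha^2+\beta^2$, a short computation gives
\[
n_2 = q + \rho - (2(p+q)-4)\sigma, \qquad n_3 = \tfrac{1}{2}\bigl((2(p+q)-3)\sigma - \rho\bigr),
\]
and $\sigma = 0$ (i.e.\ $a = b = p+q$) yields $(n_1, n_2, n_3) = (2p, q, 0)$, which is the target sequence.

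The main remaining work is to exclude $\sigma \geq 1$. The constraint $n_2 \geq 0$ rewrites as $\rho \geq (2(p+q)-4)\sigma - q$. Since $a, b \geq 1$ (as $G'$ is connected), $\alpha, \beta \in [0, p+q-1]$, and for fixed $\alpha+\beta = \sigma$ the maximum of $\rho$ is $\sigma^2$ when $\sigma \leq p+q-1$ and $(\sigma-p-q+1)^2 + (p+q-1)^2$ when $\sigma > p+q-1$. Inserting these bounds into the above inequality produces two quadratics in $\sigma$ whose feasible sets contain no positive integer for $p, q \geq 2$; the underlying elementary inequalities are $(p+q-2)^2 > q+1$ (ruling out $\sigma \in [1, p+q-1]$) and $(p+q-1)(p+q-3) > q$ (ruling out $\sigma \geq p+q$), and both are immediate once $p, q \geq 2$. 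The degenerate sub-cases $b \leq 3$ (and $a \leq 3$) are handled analogously by re-deriving the three-identity system with $d_2$ (respectively both $d_1$ and $d_2$) counted among the small-degree vertices, where the corresponding expression for $n_2$ is again forced negative. The main obstacle is this two-range case analysis; in particular, verifying that the extremal configuration $\rho = \sigma^2$ (which would correspond to $b = 0$) is excluded by the connectivity of $G'$ before it could rescue the large-$\sigma$ regime.
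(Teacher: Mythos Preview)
Your approach is the same as the paper's at the level of strategy: both use Lemma~\ref{lem:Lcos} and Lemma~\ref{lem:Lspecdeg} to bound $d_1\le p+q$ and $d_3\le 3$, set up three linear relations in the multiplicities $n_1,n_2,n_3$, and then exploit nonnegativity to force $d_1=d_2=p+q$. Your formulas $n_2=q+\rho-(2(p+q)-4)\sigma$ and $2n_3=(2(p+q)-3)\sigma-\rho$ are a reparametrization of the paper's identities; the paper packages the same content as $\binom{d_1-1}{2}+\binom{d_2-1}{2}+n_3=2\binom{p+q-1}{2}$ together with $n_2=d_1(d_1-4)+d_2(d_2-4)-2(p+q)(p+q-4)+q$.

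Where the two diverge is only in the endgame. The paper argues \emph{sequentially}: first $d_2\ge 3$ drops out of the binomial identity (if $d_2\le 2$ then $n_3=0$ forces $\binom{d_1-1}{2}\ge 2\binom{p+q-1}{2}$, impossible); then, assuming $d_1\le p+q-1$ and plugging the extremal values into the $n_2$-formula gives $n_2\le 10-4p-3q<0$; finally, with $d_1=p+q$ fixed, assuming $d_2\le p+q-1$ gives $n_2\le 5-2p-q<0$. Pinning $d_1$ first collapses your two-range $(\sigma,\rho)$-optimization to a one-variable check, which is why the paper avoids the ``main obstacle'' you flag.

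Your optimization argument is correct but two loose ends should be tightened. First, the lemma does not assume $p\ge 2$: the hypotheses allow $p=0$ (forcing $q\ge 6$) and $p=1$ (forcing $q\ge 4$), and your key inequalities $(p+q-2)^2>q+1$ and $(p+q-1)(p+q-3)>q$ do hold there, but they are not ``immediate once $p,q\ge 2$''. Second, your split into ``$a,b\ge 4$'' versus ``$b\le 3$'' is an artifact of letting $n_k$ count all degree-$k$ vertices; if you instead adopt the paper's convention $n_i=|\{j:d_j=i,\ 3\le j\le n\}|$, the three equations hold uniformly for every $d_1,d_2$, and the small-$d_2$ case is absorbed into the single binomial line above rather than needing a separate re-derivation.
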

\begin{proof}
We note that the condition $2p+q+2\ge 8$ and $q\ge 2$ implies $p+q\ge 4$.
Since $G'$ is $L$-cospectral with $G$, $G'$ has $n$ vertices and $2(p+q)$ edges. Let $(d_1, d_2, \ldots, d_n)$ be the nonincreasing degree sequence of $G'$. From the proof of Theorem \ref{thm:main}, we recognize that $p+q+1< \lambda_1(G)< p+q+2$ and $\lambda_3(G)=2$. Hence we have $d_1\le p+q$ and $d_3\le 3$ by Lemma \ref{lem:Lspecdeg}.

For $i=1,2,3$, let $n_i=|\{j : d_j= i, \,3\le j\le n \}|$. Then the degree sequence of $G'$ is $(d_1,\, d_2,\, 3^{n_3},\, 2^{n_2},\,1^{n_1})$. By Lemma \ref{lem:Lcos}, we have the following equations:
\begin{align}
2+ \sum_{i=1}^{3} n_i&= 2p+q+2 \label{v}\\
d_1+d_2+\sum_{i=1}^{3} in_i&= 4(p+q) \label{e}\\
d_1^2+d_2^2+\sum_{i=1}^{3} i^2n_i&= 2(p+q)^2+2p+4q \label{d2}
\end{align}
Combining \eqref{v}, \eqref{e} and \eqref{d2}, we have
\begin{equation}\label{bin2}
\binom{d_1-1}{2}+\binom{d_2-1}{2}+n_3 = 2\binom{p+q-1}{2}
\end{equation}
Plugging \eqref{bin2} back into \eqref{v} and \eqref{e} and solving for $n_2$, we have
\begin{equation}\label{n2}
n_2= d_1(d_1-4) + d_2(d_2-4) -2(p+q)(p+q-4) +q.
\end{equation}

We first show that $d_2\ge 3$. Suppose that $d_2<3$. Then $n_3=0$ and by \eqref{bin2}, we have $$ 2\binom{p+q-1}{2} =\binom{d_1-1}{2}+\binom{d_2-1}{2}\le \binom{p+q-1}{2}.$$ This inequality cannot hold when $ p+q\ge 4$. Thus, $d_1\ge d_2\ge 3$.

Now we show that $d_1=d_2= p+q$. Suppose that $d_1\le p+q-1.$ By \eqref{bin2} and \eqref{n2}, we have
\begin{align*}
n_3 &\ge  2\binom{p+q-1}{2} - 2\binom{p+q-2}{2}  = 2(p+q-2)~~\text{and}\\
n_2 &\le   2(p+q-1)(p+q-5) - 2(p+q)(p+q-4) +q = 10-4p-3q .
\end{align*}
The second inequality holds because $d_1\ge d_2\ge 3$.
Since $2(p+q-2)\leq n_3\le n-2=2p+q$ and $0 \leq n_2 \leq 10-4p-3q$, we obtain $q \leq 4$ and $4p+3q \leq 10$.
Hence $p+q \leq 3$ which contradicts to $p+q \geq 4$.
Therefore we conclude $d_1= p+q$.

Suppose that $d_2\le p+q-1$. By \eqref{n2}, we have
\[n_2 \le   (p+q-1)(p+q-5) - (p+q)(p+q-4) +q = 5-2p-q, \]
Since $2p+q+2 \geq 8$, $n_2<0$ which is a contradiction. Therefore $d_2= p+q$.

By \eqref{bin2}, we have $n_3=0$. Combining \eqref{v} and \eqref{e}, we obtain $n_2= q$ and $n_1= 2p$, which completes the proof.
\end{proof}

\begin{proposition}\label{prop:b}
The binary star graph $B(p,q,p)$ is DLS.
\end{proposition}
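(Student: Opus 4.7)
The plan is to show that any graph $G'$ which is $L$-cospectral with $G=B(p,q,p)$ must be isomorphic to $G$. After handling the small cases $n\le 7$ separately (using Lemma \ref{lem:det:2} for $n\le 5$ together with known DLS results for $K_{2,q}$ and direct verification for $B(1,2,1)$ and $B(1,3,1)$), I would apply Lemma \ref{lem:Lcos} and Lemma \ref{lem:degseq} to conclude that $G'$ is connected with degree sequence $(p+q,\,p+q,\,2^{q},\,1^{2p})$. Since $\mathrm{tr}(L^3)=\sum_v d_v^{3}+3\sum_v d_v^{2}-6t$, where $t$ is the number of triangles, and the trace and the degree sequence are both spectral invariants, $t$ is determined by the Laplacian spectrum. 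The graph $G$ is bipartite, so $t=0$, and hence $G'$ is also triangle-free.

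Let $u,v$ be the two vertices of degree $p+q$ in $G'$, and let $a,b,c,d$ denote the number of edges, respectively, from pendants to $\{u,v\}$, from pendants to degree-$2$ vertices, between two degree-$2$ vertices, and from degree-$2$ vertices to $\{u,v\}$; let $e\in\{0,1\}$ indicate whether $uv\in E(G')$. The degree-sum identities $a+b=2p$, $b+2c+d=2q$, and $a+d+2e=2(p+q)$ combine to give $b=e-c$ and $d=2q-c-e$, so only three configurations are possible: (i) $e=0$, $b=c=0$; (ii) $e=1$, $c=0$, $b=1$; and (iii) $e=1$, $c=1$, $b=0$. Configuration (i) forces every degree-$2$ vertex to be adjacent to both $u$ and $v$ and each of $u,v$ to have exactly $p$ pendant neighbors, which yields $G'\cong B(p,q,p)$. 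In (ii) and (iii), any degree-$2$ vertex adjacent to both of $u,v$ would produce a triangle through the edge $uv$, so by triangle-freeness together with $q\ge 2$, (ii) is ruled out entirely and (iii) survives only when $q=2$, in which case triangle-freeness further forces the two degree-$2$ vertices $x,y$ (joined by the edge counted in $c$) to be adjacent to distinct vertices of $\{u,v\}$. Up to isomorphism this leaves exactly one candidate graph $H$, obtained from $C_4$ by attaching $p$ pendants at each of two \emph{adjacent} vertices (in contrast, $B(p,2,p)$ is obtained by attaching pendants at two non-adjacent vertices of $C_4$).

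The remaining and main obstacle is to show that $H$ is not $L$-cospectral with $B(p,2,p)$ when $p\ge 1$; coarser invariants such as the number of spanning trees (equal to $4$ in both) do not distinguish them, so I would instead compare the multiplicity of the Laplacian eigenvalue $1$. The equitable partition of $H$ into the two pendant stars together with the four singletons $\{u\},\{v\},\{x\},\{y\}$, combined with the $\mathbb{Z}/2$-symmetry swapping $u\leftrightarrow v$, $x\leftrightarrow y$, and the two pendant stars, splits the $6\times 6$ Laplacian quotient into two $3\times 3$ blocks. A direct computation gives the characteristic polynomial of the symmetric block as $x(x-1)(x-p-2)$ (contributing a single eigenvalue $1$) and that of the antisymmetric block as $x^{3}-(p+7)x^{2}+(3p+14)x-8$, which evaluates to $2p\neq 0$ at $x=1$ (so contributes no eigenvalue $1$ for $p\ge 1$). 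Combining this single $1$ with the $2(p-1)$ eigenvalues equal to $1$ coming from the two pendant stars, the multiplicity of $1$ in the Laplacian spectrum of $H$ is $2p-1$. On the other hand, from Theorem \ref{char:B} we have $\mu(B(p,2,p),x)=x(x-1)^{2p-2}(x-2)f(x)$, and a direct substitution gives $f(1)=-p^{2}\neq 0$ for $p\ge 1$, so the multiplicity of $1$ in $B(p,2,p)$ is exactly $2p-2$. Since $2p-1\neq 2p-2$, the Laplacian spectra of $H$ and $B(p,2,p)$ differ, completing the proof.
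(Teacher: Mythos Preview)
Your proof is correct and reaches the same endgame as the paper, but the path you take to narrow down the candidate graphs is genuinely different. The paper, after fixing the degree sequence via Lemma~\ref{lem:degseq}, splits according to the number $s$ of common neighbours of $u$ and $v$, and then eliminates the resulting families by comparing \emph{spanning tree counts} (using that $B(p,q,p)$ has $2^{q-1}q$ spanning trees); this leaves exactly one survivor, the graph in Figure~\ref{fig:q-2}(b) at $q=2$, which is precisely your $H$. You instead invoke the identity $\operatorname{tr}(L^3)=\sum d_v^{3}+3\sum d_v^{2}-6t$ to deduce that $G'$ is triangle-free, and then run a clean edge-type bookkeeping $(a,b,c,d,e)$ which collapses the problem to the same unique candidate $H$. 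Both approaches then finish by showing the multiplicity of the eigenvalue $1$ in $H$ is $2p-1$ versus $2p-2$ in $B(p,2,p)$; the paper does this by computing the rank of $L(H)-I$ directly, while you use the equitable partition and the $\mathbb{Z}/2$-symmetry to split the quotient into two $3\times 3$ blocks. Your route is arguably tidier, since the triangle-count argument immediately kills configurations (ii) and (iii) for $q\ge 3$ without any tree counting; the paper's method, on the other hand, avoids the $\operatorname{tr}(L^3)$ formula and stays closer to the spanning-tree invariant already used earlier in the paper. One minor point: your treatment of the small cases $n\le 7$ is more informal than the paper's (which lists all admissible degree sequences and checks each graph), and your claim that configurations (ii) and (iii) are ruled out by triangle-freeness implicitly uses that $q-1$ (resp.\ $q-2$) of the degree-$2$ vertices are forced to be common neighbours of $u$ and $v$ --- this follows from your equations but deserves a sentence.
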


\begin{proof}
By Lemma \ref{lem:det:2} and \ref{lem:det:1}, it is enough to prove for $n=2p+q+2\ge 6$ and $q\ge 2$.
First, we assume $n=2p+q+2\ge 8$.
Let $G'$ be a graph $L$-cospectral with $B(p,q,p)$. By Lemma \ref{lem:degseq}, $G'$ has the degree sequence
\begin{equation}\label{eq:degG'}
  \deg(G')=\deg(G)=(\,p+q,\, p+q, \,2^q,\, 1^{2p}\,).
\end{equation}
Let $u$ and $v$ be vertices of $G'$ having the degree $p+q$. Suppose that $u$ and $v$ are adjacent. Let $s=|\{w\in V(G') : N_{G'}(w)=\{u,v\} \}|.$ By the degree condition (\ref{eq:degG'}), $u$ and $v$ have at least $2(p+q-1)-(2p+q)=q-2$ common neighbors. Thus $s\ge q-2$. Since $u$ and $v$ have degree $p+q$ and $G'$ is connected, there can be at most $2(p+q-1-s)$ pendant vertices. Hence we have $2(p+q-1-s)\ge 2p$. Therefore we have two cases $s=q-2$ and $s=q-1$.

When $s=q-2$, by the degree condition (\ref{eq:degG'}), two vertices in $(N_{G'}(u)\cup N_{G'}(v)) - (N_{G'}(u)\cap N_{G'}(v))$ must be adjacent. The possible two cases are shown in Figure \ref{fig:q-2}.
\begin{figure}[h!t!]
  \centering
  \includegraphics[width=13cm]{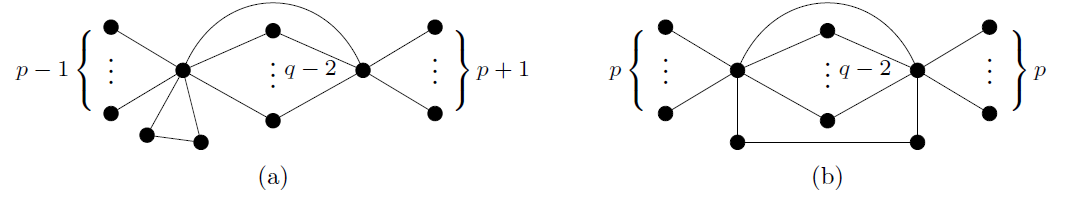}
  \caption{Graphs with $s=q-2$}\label{fig:q-2}
\end{figure}

Since $G'$ is $L$-cospectral with $B(p,q,p)$, the number of spanning trees of $G'$ is the same as that of $B(p,q,p)$, which is $2^{q-1}q$ by the proof of Theorem \ref{char:B}. But we can see that the number of spanning trees of the graph in Figure \ref{fig:q-2}(a) is $3(2^{q-2} +(q-2)2^{q-3})= 2^{q-3}3q$, and that of the graph in Figure \ref{fig:q-2}(b) is $2^{q-3}(3q+2)$.
Therefore we have a contradiction except for the graph in Figure \ref{fig:q-2}(b) when $q=2$.

Let  $G'$ be the graph in Figure \ref{fig:q-2}(b) for $q=2$.
\begin{figure}[h!t!]
  \centering
  \includegraphics[width=5cm]{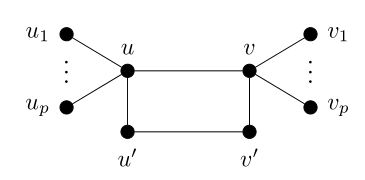}
  \caption{Graph in Figure \ref{fig:q-2}(b) for $q=2$ }\label{fig:q-2:q=2}
\end{figure}

\noindent We will show that $G'$ has Laplacian eigenvalue $1$ with multiplicity $2p-1$, which implies that $G'$ is not $L$-cospectral with $B(p,q,p)$, because the Laplacian eigenvalue $1$ of $B(p,q,p)$ has multiplicity $2p-2$.
We label the vertices as in Figure \ref{fig:q-2:q=2}.
When $p=1$, the matrix $L(G')-I$ is
\begin{figure}[h!t!]
  \centering
  \includegraphics[width=6cm]{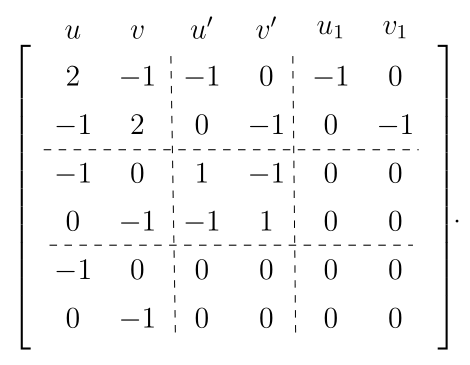}
\end{figure}

The rank of this matrix is $5$.
When $p>1$, the matrix $L(G')-I$ is obtained from the above matrix by adding the last two rows and columns $(p-1)$ times. Thus the matrix $L(G')-I$ still has rank $5$.
Hence the multiplicity of the Laplacian eigenvalue $1$ of $G'$ is $2p-1$.

Next, we examine the case $s=q-1$. When $s=q-1$, $G'$ is obtained by attaching a pendant vertex to a degree 1 vertex in $B'(p,q-1,p)$ as shown in Figure \ref{fig:q-1}.

\begin{figure}[h!t!]
  \centering
  \includegraphics[width=6cm]{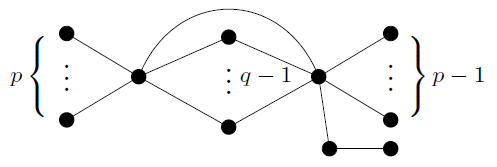}
  \caption{A graph with $s=q-1$}\label{fig:q-1}
\end{figure}

The number of spanning trees of graphs in this case is the same as that of $B'(p,q-1,p)$, which is $2^{q-2}(q+1)$ by Remark \ref{spantree} or by direct counting. This contradicts that $G'$ is $L$-cospectral with $B(p,q,p)$.

Now we suppose that $u$ and $v$ are not adjacent. In this case, by the degree condition \eqref{eq:degG'}, $u$ and $v$ must have at least $2(p+q)-(2p+q)=q$ common neighbors. Therefore $G'$ must be isomorphic to $B(p,q,p)$.

By Lemma \ref{lem:det:2} and \ref{lem:det:1}, the remaining cases are $n=6,7$ and $q\geq 2$: $(p,q)=(0,4), (1,2)$ when $n=6$ and  $(0,5), (1,3)$ when $n=7$.
By using the same argument as Lemma \ref{lem:degseq}, one can check all possible degree sequences satisfying \eqref{v}, \eqref{e} and \eqref{d2}.
We list here all the cases.
\begin{center}
\begin{tabular}{|c|c|c|}
  \hline
  $n$ & $(p,q)$ & degree sequence  \\
  \hline
  6 & $(1,2)$ & $(\,3,\,3,\,2,\,2,\,1,\,1\,)$ \\
  6 & $(0,4)$ & $(\,4,\,3,\,3,\,3,\,2,\,1\,)$  \\
  6 & $(0,4)$ & $(\,4,\,4,\,2,\,2,\,2,\,2\,)$  \\
  7 & $(1,3)$ & $(\,4,\,3,\,3,\,3,\,1,\,1,\,1\,)$  \\
  7 & $(1,3)$ & $(\,4,\,4,\,2,\,2,\,2,\,1,\,1\,)$  \\
  7 & $(0,5)$ & $(\,5,\,4,\,3,\,3,\,3,\,1,\,1\,)$  \\
  7 & $(0,5)$ & $(\,5,\,5,\,2,\,2,\,2,\,2,\,2\,)$ \\
  \hline
\end{tabular}
\end{center}
When $\deg(G')=(\,p+q,\,p+q,\,2^q,\,1^{2p}\,)$, the same argument as above shows that there are no graphs $L$-cospectral with $B(p,q,p)$.
For the other cases,
all connected graphs with their Laplacian eigenvalues(rounded to two decimal places) are shown in Figure \ref{fig1}, \ref{fig2} and  \ref{fig3}.
All of them are not $L$-cospectral with the corresponding $B(p,q,p)$, which completes the proof.
\end{proof}
\begin{figure}[h!t!]
  \centering
  \includegraphics[width=12cm]{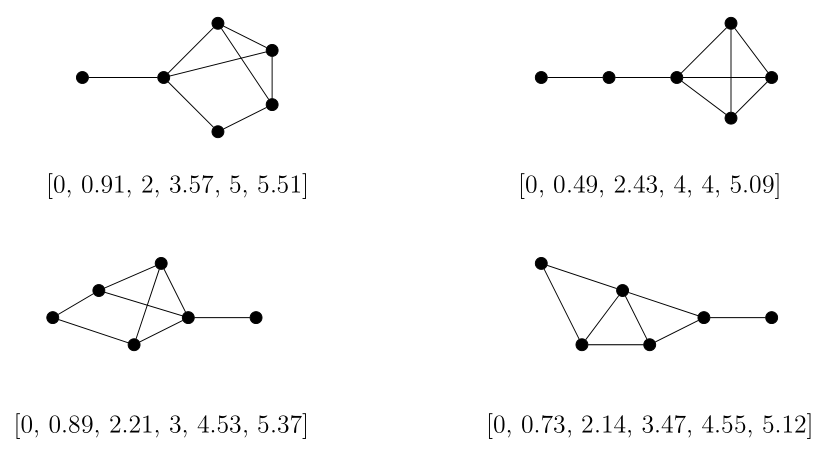}
   \caption{Graphs with degree $(\,4,\,3,\,3,\,3,\,2,\,1\,)$}\label{fig1}
\end{figure}
\begin{figure}[h!t!]
  \centering
  \includegraphics[width=12cm]{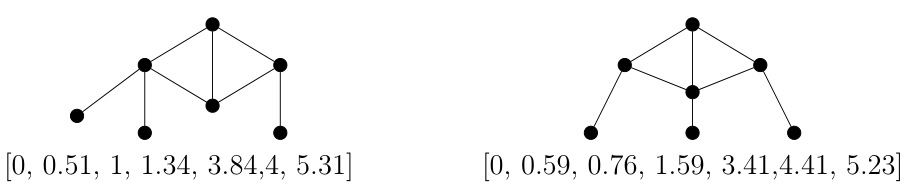}
  \caption{Graphs with degree $(\,4,\,3,\,3,\,3,\,1,\,1,\,1\,)$}\label{fig2}
\end{figure}

\begin{figure}[h!t!]
  \centering
  \includegraphics[width=12cm]{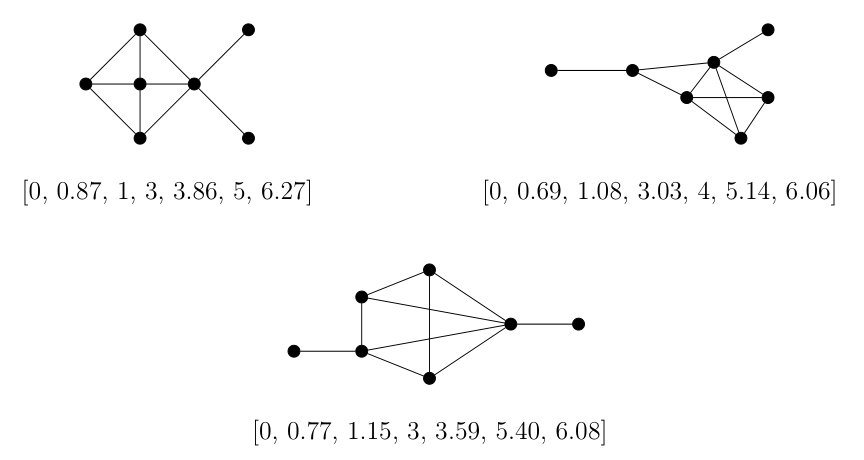}
  \caption{Graphs with degree $(\,5,\,4,\,3,\,3,\,3,\,1,\,1\,)$}\label{fig3}
\end{figure}

Now we show that $B'(p,q,p)$ is DLS.

\begin{lemma}\label{lem:degseq2}
Let $G= B'(p,q,p)$ with $2p+q+2\ge 6$ and $q\ge 1$. Suppose that a graph $G'$ is $L$-cospectral with $G$. Then $$\deg(G')=\deg(G)=(\,p+q+1, \,p+q+1, \,2^q, \,1^{2p}\,).$$
\end{lemma}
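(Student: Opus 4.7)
The plan is to follow the strategy of Lemma \ref{lem:degseq}, with parameters adjusted to account for the extra edge $uv$ present in $B'(p,q,p)$. Since $G'$ is $L$-cospectral with $G$, Lemma \ref{lem:Lcos} gives that $G'$ has $n=2p+q+2$ vertices, $2p+2q+1$ edges, and sum of squared degrees equal to $2(p+q+1)^2+4q+2p$. By Theorem \ref{thm:main} and the computations in its proof, the Laplacian spectrum of $G$ satisfies $\lambda_1(G)<p+q+3$ and $\lambda_3(G)=2$, so by Lemma \ref{lem:Lspecdeg} we obtain $d_1\le p+q+1$ and $d_3\le 3$. Writing the nonincreasing degree sequence of $G'$ as $(d_1,d_2,3^{n_3},2^{n_2},1^{n_1})$, the three cospectral invariants yield three linear equations in $d_1,d_2,n_1,n_2,n_3$, the exact analogues of \eqref{v}, \eqref{e}, and \eqref{d2}.

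I would next combine these to derive the two identities
\begin{equation*}
\binom{d_1-1}{2}+\binom{d_2-1}{2}+n_3 \;=\; 2\binom{p+q}{2},
\end{equation*}
\begin{equation*}
n_2 \;=\; d_1(d_1-4)+d_2(d_2-4)-2(p+q)(p+q-1)+2p+3q+6,
\end{equation*}
which play the role of \eqref{bin2} and \eqref{n2}. The constants differ from those in Lemma \ref{lem:degseq} because the edge count is now $2p+2q+1$ instead of $2(p+q)$ and the anticipated maximum degree is $p+q+1$ instead of $p+q$.

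The remainder of the argument mimics that of Lemma \ref{lem:degseq}. First, if $d_2\le 2$ then $n_3=0$ and the binomial identity forces $\binom{d_1-1}{2}\ge 2\binom{p+q}{2}$, which is impossible since $d_1\le p+q+1$ gives $\binom{d_1-1}{2}\le\binom{p+q}{2}$ and $\binom{p+q}{2}\ge 3$ holds because $p+q\ge 3$ (an immediate consequence of $2p+q\ge 4$ and $q\ge 1$). Hence $d_1\ge d_2\ge 3$. Using the monotonicity of $d(d-4)$ on $[2,\infty)$, if $d_1\le p+q$ then both $d_1(d_1-4)$ and $d_2(d_2-4)$ are at most $(p+q)(p+q-4)$, and the $n_2$ identity gives $n_2\le -4p-3q+6$, which is negative under our hypothesis; so $d_1=p+q+1$. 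Substituting $d_1=p+q+1$ into the $n_2$ identity and assuming $d_2\le p+q$ similarly yields $n_2\le 3-2p-q<0$, forcing $d_2=p+q+1$. Finally, the binomial identity forces $n_3=0$, and solving the vertex and edge equations yields $n_2=q$ and $n_1=2p$, as desired.

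The main obstacle is to confirm that the two arithmetic inequalities $-4p-3q+6<0$ and $3-2p-q<0$ remain strict under the slightly weaker hypothesis $2p+q+2\ge 6$, $q\ge 1$ (compared with $2p+q+2\ge 8$, $q\ge 2$ in Lemma \ref{lem:degseq}). The minimum value of $p+q$ under our constraints is $3$, attained at $(p,q)\in\{(1,2),(2,1)\}$, and a direct check confirms both inequalities throughout the admissible region. The algebraic derivation of the two key identities is routine, but the shifted constants must be tracked carefully, since a sign or index slip would break the contradiction chain.
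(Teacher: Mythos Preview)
Your proposal is correct and follows essentially the same route as the paper's proof: bound $d_1$ and $d_3$ via Lemma~\ref{lem:Lspecdeg} and the eigenvalue location from Theorem~\ref{thm:main}, set up the three cospectral equations, derive the binomial identity and the $n_2$ formula (your expression $-2(p+q)(p+q-1)+2p+3q+6$ is algebraically the same as the paper's $-2(p+q+1)(p+q-3)+q$), and then force $d_1=d_2=p+q+1$ by contradiction. Your argument is in fact slightly leaner in the step ruling out $d_1\le p+q$: the paper also derives the lower bound $n_3\ge 2(p+q-1)$, but as you implicitly observe, the single inequality $n_2\le 6-4p-3q<0$ already suffices under the hypotheses.
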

\begin{proof}
We note that the condition $n=2p+q+2\ge 6$ and $q\ge 1$ implies $p+q\ge 3$.
Since $G'$ is $L$-cospectral with $G$, $G'$ has $n$ vertices and $2(p+q)+1$ edges. Let $(d_1, d_2, \ldots, d_n)$ be the nonincreasing degree sequence of $G'$. From the proof of Theorem \ref{thm:main}, we recognize that $p+q+2 \leq \lambda_1(G)< p+q+3$ and $\lambda_3(G)=2$. Hence we have $d_1\le p+q+1$ and $d_3\le 3$ by Lemma \ref{lem:Lspecdeg}.

We use the same notation for $n_i$ as in Lemma \ref{lem:degseq}. Then the degree sequence of $G'$ is $(\,d_1,\, d_2, \,3^{n_3}, \,2^{n_2},\,1^{n_1}\,)$. By Lemma \ref{lem:Lcos}, we have the following equations:
\begin{align}
2+ \sum_{i=1}^{3} n_i&= 2p+q+2 \label{v1}\\
d_1+d_2+\sum_{i=1}^{3} in_i&= 4p+4q+2 \label{e1}\\
d_1^2+d_2^2+\sum_{i=1}^{3} i^2n_i&= 2(p+q+1)^2+2p+4q \label{d21}
\end{align}
Combining \eqref{v1}, \eqref{e1} and \eqref{d21}, we have
\begin{equation}\label{bin21}
\binom{d_1-1}{2}+\binom{d_2-1}{2}+n_3 = 2\binom{p+q}{2}
\end{equation}
Plugging \eqref{bin21} back into \eqref{v1} and \eqref{e1} and solving for $n_2$, we have
\begin{equation}\label{n21}
n_2= d_1(d_1-4) + d_2(d_2-4) -2(p+q+1)(p+q-3) +q.
\end{equation}

We first show that $d_2\ge 3$. Suppose that $d_2< 3$. Then $n_3=0$ and by \eqref{bin21}, we have $$ 2\binom{p+q}{2} =\binom{d_1-1}{2}+\binom{d_2-1}{2}\le \binom{p+q}{2}.$$ This inequality cannot hold when $p+q\ge 3$. Thus, $d_1\ge d_2\ge 3$.

Now we show that $d_1=d_2= p+q+1$. Suppose that $d_1\le p+q.$ By \eqref{bin21} and \eqref{n21}, we have
\begin{align*}
n_3 &\ge  2\binom{p+q}{2} - 2\binom{p+q-1}{2}  = 2(p+q-1) ~~\text{and}\\
n_2 &\le   2(p+q)(p+q-4) - 2(p+q+1)(p+q-3) +q = 6-4p-3q .
\end{align*}
Since $2(p+q-1)\leq n_3\le 2p+q$ and $0\leq n_2\leq 6-4p-3q$, we obtain $q \leq 2$ and $4p+3q \leq 6$.
Hence $p+q \leq 2$ which contradicts to $p+q\geq3$. Therefore we conclude $d_1= p+q+1$.

Suppose that $d_2\le p+q$. By \eqref{n21}, we have
\[n_2 \le   (p+q)(p+q-4) - (p+q+1)(p+q-3) +q = 3-2p-q. \]
Since $2p+q+2 \geq 6$, $n_2<0$ which is a contradiction. Therefore $d_2= p+q+1$.

By \eqref{bin21}, we have $n_3=0$. By \eqref{v1} and \eqref{e1}, we obtain $n_2= q$ and $n_1= 2p$, which completes the proof.
\end{proof}

\begin{proposition}\label{prop:bbar}
The binary star graph $B'(p,q,p)$ is DLS.
\end{proposition}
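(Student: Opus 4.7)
The plan is to follow the blueprint of Proposition \ref{prop:b}. By Lemma \ref{lem:det:2} every graph of order at most $5$ is DLS, and by Lemma \ref{lem:det:1} the double starlike tree $B'(p,0,p)=H(p,2,p)$ is DLS. Thus we may assume $n=2p+q+2\ge 6$ and $q\ge 1$ and let $G'$ be $L$-cospectral with $G=B'(p,q,p)$. Lemma \ref{lem:degseq2} then supplies the degree sequence
\[
\deg(G')=(\,p+q+1,\,p+q+1,\,2^q,\,1^{2p}\,),
\]
and we write $u$, $v$ for the two vertices of degree $p+q+1$.

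The key structural observation is the following common-neighbor ceiling: any common neighbor of $u$ and $v$ has degree at least $2$, and the only vertices of degree $\geq 2$ in $V(G')\setminus\{u,v\}$ are the $q$ vertices of degree exactly $2$; hence $|N(u)\cap N(v)|\le q$, and each common neighbor has $\{u,v\}$ as its whole neighborhood. We now split on whether $u$ and $v$ are adjacent.

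If $u\not\sim v$, then inclusion-exclusion inside $V(G')\setminus\{u,v\}$ gives $|N(u)\cap N(v)|\ge 2(p+q+1)-(2p+q)=q+2$, contradicting the ceiling. If $u\sim v$, the same count with one vertex removed from each neighborhood yields $|N(u)\cap N(v)|\ge 2(p+q)-(2p+q)=q$, which together with the ceiling forces equality; so the $q$ common neighbors are precisely the $q$ degree-$2$ vertices and each is adjacent only to $u$ and $v$. The remaining $p+q-q=p$ neighbors of $u$ (besides $v$) must therefore lie among the $2p$ pendants, and likewise for $v$; since pendants have degree $1$, the two pendant sets are disjoint and together exhaust all $2p$ of them. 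This forces $G'\cong B'(p,q,p)$.

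The main thing to watch for is the sharpness of the degree-sequence rigidity in Lemma \ref{lem:degseq2}, which is exactly what makes the two-sided squeeze $q \le |N(u)\cap N(v)| \le q$ work in the adjacent case. Because of this tight squeeze, the argument does not generate near-cospectral competitors analogous to those in Figures \ref{fig:q-2}(a),(b), so no separate spanning-tree bookkeeping and no small-$n$ case tabulation (as in Figures \ref{fig1}--\ref{fig3}) are needed; the proof ends up noticeably shorter than that of Proposition \ref{prop:b}.
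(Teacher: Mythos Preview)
Your proof is correct and follows essentially the same route as the paper: reduce via Lemmas \ref{lem:det:2} and \ref{lem:det:1} to $n\ge 6$, $q\ge 1$, invoke Lemma \ref{lem:degseq2} for the degree sequence, rule out $u\not\sim v$ by the common-neighbor lower bound $q+2$ exceeding the ceiling $q$, and in the adjacent case use the squeeze $q\le |N(u)\cap N(v)|\le q$ to force $G'\cong B'(p,q,p)$. If anything, you spell out the two-sided squeeze and the pendant-partition step more explicitly than the paper does.
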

\begin{proof}
By Lemma \ref{lem:det:2} and \ref{lem:det:1}, we may assume $2p+q+2\ge 6$ and $q\ge 1$.
Let $G'$ be a graph $L$-cospectral with $B'(p,q,p)$. By Lemma \ref{lem:degseq2}, $G'$ has the degree sequence
\begin{equation}\label{eq:degG':B'}
  \deg(G')=\deg(G)=(\,p+q+1, \,p+q+1,\, 2^q,\, 1^{2p}\,).
\end{equation}
Let $u$ and $v$ be vertices of $G'$ having the degree $p+q+1$. Suppose that $u$ and $v$ are not adjacent. Then by the degree condition \eqref{eq:degG':B'}, $u$ and $v$ must have at least $2(p+q+1)-(2p+q)=q+2$ common neighbors, which contradicts that there are $q$ degree 2 vertices. Thus $u$ and $v$ are adjacent.

When $u$ and $v$ are adjacent, by the degree condition \eqref{eq:degG':B'}, $u$ and $v$ must have $q$ common neighbors. Then it is easy to see that $G'$ is isomorphic to $B'(p,q,p)$.
\end{proof}



\begin{thebibliography}{10}


\bibitem{Ahanjideh}
Ahanjideh~M, Akbari~S, Fakharan~MH, Trevisan~V.
\emph{Laplacian eigenvalue distribution and graph parameters.}
Linear Algebra Appl. 632 (2022) 1--14.

\bibitem{vanDam}
van Dam~ER, Haemers~WH.
\emph{Which graphs are determined by their spectrum?}
Linear Algebra Appl. 373 (2003) 241--272.

\bibitem{Faria}
Faria~I.
\emph{Permanental roots and the star degree of a graph.}
Linear Algebra Appl. 64 (1985) 255--265.

\bibitem{Fiedler}
Fiedler~M.
\emph{Algebraic connectivity of graphs.}
Czechoslovak Math. J. 23(98) (1973) 298--305.

\bibitem{Grone}
Grone~R,  Merris~R, Sunder~VS.
\emph{The {L}aplacian spectrum of a graph.}
SIAM J. Matrix Anal. Appl. 11(2) (1990) 218--238.

\bibitem{Grone-O}
Grone~R, Merris~R.
\emph{Ordering trees by algebraic connectivity.}
Graphs Combin. 6(3) (1990) 229--237.

\bibitem{Grone-g2}
Grone~R, Merris~R.
\emph{The Laplacian spectrum of a graph. II.}
SIAM J. Discrete Math. 7(2) (1994) 221--229.

\bibitem{Guo}
Guo~J-M.
\emph{On the third largest {L}aplacian eigenvalue of a graph.}
Linear Multilinear Algebra.  55(1) 2007 93--102.

\bibitem{HS} Haemers~WH, Spence~E,
\emph{Enumeration of cospectral graphs.}
European J. Combin. 25 (2004) 100--211.

\bibitem{Hedetniemi}
Hedetniemi~ST, Jacobs~DP, Trevisan~V.
\emph{Domination number and Laplacian eigenvalue distribu-tion.} Eur. J. Comb. 53 (2016) 66–71.


\bibitem{Li-Pan}
Li~J-S, Pan~Y-L.
\emph{A note on the second largest eigenvalue of the {L}aplacian matrix of a graph.}
Linear and Multilinear Algebra. 48(2) (2000) 117--121.

\bibitem{Liu}
Liu~X, Zhang~Y, Lu~P.
\emph{One special double starlike graph is determined by its {L}aplacian spectrum}
Appl. Math. Lett. 22(4) (2009) 435--438.


\bibitem{Lu}
Lu~P, Liu~X.
\emph{Laplacian spectral characterization of some double starlike trees.}
ar{X}iv:1205.6027.

\bibitem{Mohar}
Mohar~B.
\emph{The {L}aplacian spectrum of graphs.}
In: Graph theory, combinatorics, and applications. {V}ol. 2 ({K}alamazoo, {MI}, 1988). Wiley, New York; 1991. Wiley-Intersci. Publ.; p.871--898.

\bibitem{Oliveira}
Oliveira~CS, Maia~de~Abreu~NM, Jurkiewicz~S.
\emph{The characteristic polynomial of the {L}aplacian of graphs in {$(a,b)$}-linear classes}
Linear Algebra Appl. 356 (2002) 113--121.


\bibitem{Wang}
Wang~L, Yan~C, Fang~X, Geng~X, Tian~F.
\emph{Vertex-connectivity, chromatic number, domination number, maximum degree and Laplacian eigenvalue distribution.} Linear Algebra Appl. 607 (2020) 307–318.






\end{thebibliography}
\end{document}